\def\eqref#1{equation~\ref{#1}}
\def\ceil#1{\lceil #1 \rceil}
\def\1{\bm{1}}
\DeclareMathAlphabet{\mathsfit}{\encodingdefault}{\sfdefault}{m}{sl}
\SetMathAlphabet{\mathsfit}{bold}{\encodingdefault}{\sfdefault}{bx}{n}
\DeclareMathOperator*{\argmin}{arg\,min}
\newtheorem{theorem}{Theorem}
\newtheorem{lemma}{Lemma}
\newtheorem{Remark}{Remark}
\newtheorem{assumption}{Assumption}
\crefname{assumption}{Assumption}{Assumptions}
\crefname{Remark}{Remark}{Remark}
\title{On the Almost Sure Convergence of the Stochastic Three Points Algorithm}
\author{Taha EL BAKKALI EL KADI, Omar SAADI \\
UM6P College of Computing\\
Benguerir, Morocco \\
\texttt{\{taha.elbakkali,omar.saadi\}@um6p.ma} 
}
\begin{document}

\maketitle

\begin{abstract}
 The stochastic three points (STP) algorithm is a derivative-free optimization technique designed for unconstrained optimization problems in $\mathbb{R}^d$. In this paper, we analyze this algorithm for three classes of functions: smooth functions that may lack convexity, smooth convex functions, and smooth functions that are strongly convex. Our work provides the first almost sure convergence results of the STP algorithm, alongside some convergence results in expectation.
For the class of smooth functions, we establish that the best gradient iterate of the STP algorithm converges almost surely to zero at a rate of $o(1/{T^{\frac{1}{2}-\epsilon}})$ for any $\epsilon\in (0,\frac{1}{2})$, where $T$ is the number of iterations. Furthermore, within the same class of functions, we establish both almost sure convergence and convergence in expectation of the final gradient iterate towards zero.
For the class of smooth convex functions, we establish that $f(\theta^T)$ converges to $\inf_{\theta \in \mathbb{R}^d} f(\theta)$ almost surely at a rate of $o(1/{T^{1-\epsilon}})$ for any $\epsilon\in (0,1)$, and in expectation at a rate of $O(\frac{d}{T})$ where $d$ is the dimension of the space.
Finally, for the class of smooth functions that are strongly convex, we establish that when step sizes are obtained by approximating the directional derivatives of the function, $f(\theta^T)$ converges to $\inf_{\theta \in \mathbb{R}^d} f(\theta)$ in expectation at a rate of $O((1-\frac{\mu}{2\pi dL})^T)$, and almost surely at a rate of $o((1-s\frac{\mu}{2\pi dL})^T)$ for any $s\in (0,1)$,  where $\mu$ and $L$
are  the strong convexity and smoothness parameters of the function.
\end{abstract}

\section{Introduction}
We are interested in the minimization of a smooth function $f : \mathbb{R}^d \mapsto \mathbb{R}$:
$$\min_{\theta \in \mathbb{R}^d} f(\theta),$$
where we work within the constraint of not having access to the derivatives of 
$f$, relying exclusively on a function evaluation oracle.
 The methods used 
 in this framework are called derivative-free methods or zeroth-order methods \citep{conn2009introduction,ghadimi2013stochastic,nesterov2017random,larson2019derivative,golovin2019gradientless,bergou2020stochastic}. They are increasingly embraced for solving many machine learning problems where obtaining gradient information is either impractical or computationally expensive,  
 remaining crucial in applications such as generating adversarial attacks on deep neural network classifiers \citep{chen2017zoo,tu2019autozoom}, reinforcement learning \citep{malik2019derivative,salimans2017evolution}, and hyperparameter tuning of ML models \citep{snoek2012practical,turner2021bayesian}. Therefore, exploring the theoretical properties of derivative-free methods is not only of theoretical interest but also crucial for practical applications.

Zeroth-order optimization methods can be divided into two main categories: direct search methods and gradient estimation methods. In direct search methods, the objective function is evaluated along a set of directions to guarantee descent by taking appropriate small step sizes. These directions can be either deterministic \citep{vicente2013worst} or stochastic \citep{golovin2019gradientless,bergou2020stochastic}. In contrast, gradient estimation methods approximate the gradient of the objective function using zeroth-order information to design approximate gradient methods \citep{nesterov2017random,shamir2017optimal}.

A recent and noteworthy zeroth-order method is the Stochastic Three Points (STP) algorithm (see \Cref{Alg1}), a directed search method with stochastic search directions, introduced by \cite{bergou2020stochastic}. The STP algorithm stands out among zeroth-order methods for its balance of simplicity and strong theoretical guarantees.
\begin{algorithm}[H]
  \caption{Stochastic Three Points (STP)}
  \label{alg:stp}\label{Alg1}
  \begin{algorithmic}[1]
    \State \textbf{Input:} $\theta^1 \in \mathbb{R}^d$, step size sequence $\{\alpha_t\}_{t \ge 1} \in  (0,\infty)^{\mathbb{N}^{*} }$, probability distribution $\mathcal{D}$ on $\mathbb{R}^d$.
    \For{$t=1,2, \ldots$}
      \State Generate a random vector $s_t \sim \mathcal{D}$, 
      \State $\theta^{t+1} = \argmin_{\theta  \in \{  \theta^t, \theta^t + \alpha_t s_t,\theta^t - \alpha_t s_t \} } f(\theta)$.
    \EndFor
  \end{algorithmic}
\end{algorithm}

Compared to deterministic directed search (DDS) methods, the worst-case complexity bounds for STP are similar; however, they differ in their dependence on the problem's dimensionality. For STP, the bounds increase linearly with the dimension \citep{bergou2020stochastic}, whereas for DDS, they increase quadratically \citep{konevcny2014simple,vicente2013worst}. Specifically, when the objective function is smooth, STP requires \( O(d \epsilon^{-2}) \) function evaluations to get a gradient with norm smaller than \( \epsilon \), in expectation. For smooth, convex functions with a minimum and a bounded sublevel set, the complexity is \( O(d \epsilon^{-1}) \) to find an \( \epsilon\)-optimal solution. In the strongly convex case, this complexity reduces further to \( O(d \log \epsilon^{-1}) \). In all these cases, DDS methods exhibit analogous complexity bounds but with a quadratic dependence on \( d \), i.e., \( d^2 \) instead of \( d \). 
In comparison to directed search with stochastic directions, STP also matches the complexity bound derived by Gratton et al. \citep{doi:10.1137/140961602} for the smooth case, which is the only case they address in their work. In their approach, a decrease condition is imposed to determine whether to accept or reject a step based on a set of random directions. The Gradientless Descent (GLD) algorithm \citep{golovin2019gradientless} is another direct search method with stochastic directions. Golovin et al. show that an $\epsilon$-optimal solution can be found in $O(kQ \log(d) \log\left(R \epsilon^{-1}\right))$ for any monotone transform of a smooth and strongly convex function with latent dimension $k < d$, where the input dimension is $d$, $R$ is the diameter of the input space, and $Q$ is the condition number. When the monotone transformation is the identity and $k = d$, this complexity is higher than the one obtained for the STP algorithm by a factor of $\log(d)$. However, it is important to note that monotone transforms of smooth and strongly convex functions are not necessarily strongly convex.

Compared to approximate gradient methods, STP matches the complexity bounds of the random gradient-free (RGF) 
algorithm \citep{nesterov2017random} (see \cref{exper}) across the three cases: smooth non-convex, smooth convex, and smooth strongly convex. 
This matching in complexities is in terms of the accuracy $\epsilon$ and the dimensionality $d$.  


In practical terms, for classical applications of zeroth-order methods, STP variants demonstrate strong performance when compared to state-of-the-art methods. For instance, in reinforcement learning and continuous control, specifically in the MuJoCo simulation suite~\citep{6386109}, STP with momentum (which, in expectation, achieves the same complexity bounds as standard STP, see~\cite{gorbunov2020momentum}) outperforms methods like Augmented Random Search (ARS), Trust Region Policy Optimization (TRPO), and Natural Policy Gradient (NG) across environments such as Swimmer-v1, Hopper-v1, HalfCheetah-v1, and Ant-v1. Even in the more challenging Humanoid-v1 environment, STP with momentum achieves competitive results~\citep{gorbunov2020momentum}. Additionally, in the context of generating adversarial attacks on deep neural network classifiers, the Minibatch Stochastic Three Points (MiSTP)  method ~\citep{boucherouite2024minibatch} demonstrates superior performance compared to other variants of zero-order methods, that are adapted to the stochastic setting, such as RSGF (also called ZO-SGD) \citep{ghadimi2013stochastic}, ZO-SVRG-Ave, and ZO-SVRG \citep{liu2018zeroth}.

Within the realm of first-order optimization methods that rely on gradient information, numerous studies have investigated the almost sure convergence of the Stochastic Gradient Descent (SGD)  algorithm and its variants \citep{bertsekas2000gradient, nguyen2019new, mertikopoulos2020almost, sebbouh2021almost, liu2022almost}. In contrast, the literature on the almost sure convergence of zeroth-order methods remains less developed compared to that of SGD. 

In \citep{doi:10.1137/140961602}, the authors investigate zeroth-order direct-search methods under a probabilistic descent framework. Specifically, they generate randomly the search directions, while assuming that with a certain probability at least one of them is of descent type. For smooth objective functions, their analysis establishes (in Theorem $3.4$) the almost sure convergence of the best iterate of the gradient norm to zero. However, the analysis does not provide a convergence rate for this almost sure convergence result, nor does it guarantee the convergence of the gradient norm of the last iterate. In our paper, we provide such results for the STP algorithm (see~\Cref{Table1}). 
\citep{doi:10.1137/140961602} also establish (in Corollary $4.7$) a convergence rate $O(1/\sqrt{T})$ for the best iterate with overwhelmingly high probability, but this rate is still not guaranteed almost surely. In our work, we provide the first almost sure convergence rate of the best iterate for zeroth-order methods (see~\Cref{Table1}). 
More recently, \cite{wang2022convergence} explore the convergence of the Stochastic Zeroth-order Gradient Descent (SZGD) algorithm for objective functions satisfying the \L{}ojasiewicz inequality. Assuming smoothness, they demonstrated (in Lemma $1$) that the gradient norm of the last iterate converges to zero. Furthermore, in Lemma $2$, they proved that the sequence generated by the SZGD algorithm converges almost surely to a critical point, which is a stronger result, since the gradient of $f$ is continuous. However, this analysis is limited to \L{}ojasiewicz functions, which, by definition, satisfy a strong property which is the property essentially used in the analysis of strongly convex functions.

In this paper, we are interested in studying the almost sure convergence of the STP algorithm. For the three classes of functions (smooth, smooth convex, and smooth strongly convex),
first convergence results, in terms of expectation, were provided in \cite{bergou2020stochastic}. However,
it is crucial to note that ensuring almost sure convergence properties is essential for understanding
the behavior of each trajectory of the STP algorithm and guaranteeing that any instantiation of the
algorithm converges with probability one.





\textbf{Our Contribution $\&$ Related Work.}
In cases where the only verified assumptions regarding the function are its smoothness and having a lower bound, Bergou et al. established in their paper \cite[Theorem 4.1]{bergou2020stochastic} that by using \Cref{Alg1} and selecting a step size sequence $\{\frac{\alpha}{\sqrt{t}}\}_{t \ge 1 }$ with $\alpha>0$, the best gradient iterate converges in expectation to $0$ at a rate of $O(\frac{\sqrt{d}}{\sqrt{T}}).$  Expanding on this, we prove that employing a similar step size sequence $\{\frac{\alpha}{t^{\frac{1}{2}+\epsilon} }\}_{t \ge 1}$ with $\epsilon \in (0,\frac{1}{2})$  results in an almost sure convergence rate of $o(\frac{1}{T^{\frac{1}{2}-\epsilon }} )  $, which is arbitrarily close to the rate achieved for the convergence in expectation when $\epsilon$ is close to $0$ (see ~\Cref{Thm1}). It's worth noting that a similar almost sure convergence result has been established for the SGD Algorithm. For more information, refer to \cite[Corollary 18]{sebbouh2021almost} and   \cite[Theorem 1]{liu2022almost}. However, it should be noted that this similar result for the SGD Algorithm is provided for $\min_{1 \le t \le T} \lVert \nabla f(\theta^t) \rVert^2$, while for the STP Algorithm, it is provided for $\min_{1 \le t \le T} \lVert \nabla f(\theta^t) \rVert$.  More precisely, for the STP algorithm, we have
$\min_{1 \le t \le T} \lVert \nabla f(\theta^t) \rVert=o(1/T^{ \frac{1}{2} - \epsilon})$, while for the SGD algorithm, we have $\min_{1 \le t \le T} \lVert \nabla f(\theta^t) \rVert=o(1/T^{ \frac{1}{4} - \frac{\epsilon}{2}} )$. The issue with both convergence results, whether it's the one by Bergou et al. \cite[Theorem 4.1]{bergou2020stochastic}  about the convergence in expectation or our first result about the almost sure convergence, is that they don't guarantee the gradient of $f$ at the final point $\theta^T$ to be small (either in expectation or almost surely). Instead, they assure that the gradient of $f$ at some point produced by the STP algorithm is small. In our paper, we additionally prove that the gradient of $f$ at the final point $\theta^T$ converges to $0$ almost surely and in expectation without requiring additional assumptions about the function beyond its smoothness and having a lower bound (see~\Cref{Thm2,Thm3}). Notably, for the case of the SGD algorithm, the question of the almost sure convergence of the last gradient iterate has been addressed in \cite{bertsekas2000gradient}.

For smooth convex functions, if $f$ has a global minimum $\theta^{*}$ and possesses a bounded sublevel set, we show that selecting a step size sequence $\alpha_t=O(\frac{1}{t^{1-\beta}})$ for some $\beta \in (0 ,\frac{1}{2})$ ensures that $f(\theta^T)$ converges almost surely to $ f(\theta^{*})$ at a rate of $o(\frac{1}{T^{1-\epsilon}})$ for all $\epsilon \in (2\beta,1)$ (see ~\Cref{Thm5}). A similar result, with the same convergence rate and the same criteria for choosing the step size sequence, is established for the stochastic Nesterov's accelerated gradient algorithm by Jun Liu et al. in \cite[Theorem 3]{liu2022almost}. For the same class of functions and under the same assumptions, Bergou et al. established in \cite[Theorem 5.5]{bergou2020stochastic} that for a fixed precision $\epsilon$ and a sufficiently large number of iterations $T$ on the order of $\frac{1}{\epsilon}$, by selecting a step size sequence $\{\frac{|f(\theta^t+hs_t )-f(\theta^t)|}{Lh} \}_{t \ge 1}$ where $h$ is sufficiently small on the order of $\frac{}{}\mathbb{E}[f(\theta^{T-1})]- f(\theta^{*})$,  one can get: $\mathbb{E}[f(\theta^{T})]-f(\theta^{*}) \le \epsilon$. 
Here, the choice of $h$ depends on the quantity $\mathbb{E}[f(\theta^{T-1})]$ which is not known at the begining.
Moreover, the theorem does not guarantee that $\mathbb{E}[f(\theta^T)]$ converges to $f(\theta^{*})$, because the step sizes depend on $\epsilon$. In contrast, in \Cref{Thm4}, we show that by selecting a step size sequence $\{\frac{\alpha}{t}\}_{t \ge 1}$, where $\alpha$ is suitably chosen, $\mathbb{E}[f(\theta^T)]$ converges to $f(\theta^{*})$ at a rate of $O(\frac{d}{T})$  . 

For smooth, strongly convex functions, Bergou et al. established in \cite[Theorem 6.3]{bergou2020stochastic} that, for any $\epsilon > 0$, using the step size sequence $  \{\frac{|f(\theta^{t} + h s_t) - f(\theta^t)|}{Lh}\}_{t\ge 1}$, where $h$ is small on the order of $\sqrt{\epsilon}$, the gap between the expected value of the objective function and its infimum stays within $\epsilon$ accuracy for 
a number of iterations 
on the order of $\log(\frac{ f(\theta^t) - \inf_{\theta \in \mathbb{R}^d } f(\theta) }{\epsilon})$. 
However, this result doesn't indicate how the gap $\mathbb{E}[f(\theta^T)] -\inf_{\theta \in \mathbb{R}^d} f(\theta) $ improves with more iterations and does not guarantee convergence since the step sizes depend on $\epsilon$. To address this issue, we define the step size sequence as $\{ \frac{|f(\theta^{t} + h^{-t} s_t) - f(\theta^t)|}{Lh^{-t}} \}_{t \ge 1}$ with a suitable $h$, leading to a convergence rate of $O((1-\frac{\mu}{2\pi dL})^T)$ in expectation, and $o((1-s \frac{\mu}{2\pi dL})^T)$ almost surely for all $s \in (0,1)$ (see ~\Cref{Thm6,Thm7}). All of our convergence rates are succinctly presented in~\Cref{Table1}.
\begin{table}[h!]
\caption{Summary of convergence rates for the STP algorithm.}
\centering
\fontsize{5.83pt}{9.2pt}\selectfont 
\begin{tabular}{cccccc}
\hline
\rowcolor[gray]{0.9}  
\textbf{Functions} & \textbf{Assump} & \textbf{Step size} &  \textbf{Iterate} & \textbf{Conv / Rate} & \textbf{Ref} \\
\hline
\multirow{3}{*}{Smooth} & \ref{Ass1},\ref{Ass5},\ref{Ass6} &
\multirow{2}{*}{\shortstack{$\{\frac{\alpha}{\sqrt{t}} \}_{t \ge 1}, \alpha>0$\\ \text{ }}} &
$\underset{1 \leq t \leq T}{\min} \lVert \nabla f(\theta^t) \rVert$  
& $\mathbb{E}/O\left(\frac{\sqrt{d}}{\sqrt{T}}\right)$ & 
$\begin{array}{l}
\text{Thm 4.1}\\
\text{\cite{bergou2020stochastic}}
\end{array}$

\\

& \ref{Ass1},\ref{Ass5},\ref{Ass6} & 
$\begin{cases}
\{\frac{\alpha}{t^{\frac{1}{2}+\epsilon }}\}_{t \ge 1}, \alpha>0\\
\epsilon \in (0, \frac{1}{2})
\end{cases}$ &
$\underset{1 \leq t \leq T}{\min} \lVert \nabla f(\theta^t) \rVert$ &$\text{a.s.} \; / o(\frac{1}{T^{\frac{1}{2}- \epsilon }})$ & Thm \ref{Thm1} \\

& \ref{Ass1},\ref{Ass5},\ref{Ass6} & 
$\begin{cases}
\{\alpha_t\}_{t \ge 1}\\
\sum_{t=1}^{\infty} \alpha_t^2 <\infty\\
\sum_{t=1}^{\infty} \alpha_t=\infty 
\end{cases}$ &
$ \lVert \nabla f(\theta^T) \rVert$ &$\mathbb{E} \; \&$  \text{a.s.}  / $o\left(1\right)$ & 
$\begin{array}{l}
\text{Thm \ref{Thm2}}\\
\text{Thm \ref{Thm3}}
\end{array}$

\\
\hline

\multirow{2}{*}{\shortstack{Smooth,\\ convex}} & \ref{Ass1},\ref{Ass2},\ref{Ass3},\ref{Ass5},\ref{Ass6} &
$\alpha_t=\frac{\alpha}{t}, \alpha \text{ is suitably chosen}$ &
$ f(\theta^T)-f(\theta^*)$ & $\mathbb{E}/O\left(\frac{d}{T}\right)$ & Thm \ref{Thm4} \\

& \ref{Ass1},\ref{Ass2},\ref{Ass3},\ref{Ass5},\ref{Ass6} & 
$\alpha_t= O\left(\frac{1}{t^{1-\beta}} \right), \beta \in (0,\frac{1}{2})$ &
$f(\theta^T)-f(\theta^*)$  &
$\begin{array}{l}
\text{a.s.}\; / o\left(\frac{1}{T^{1- \epsilon }}\right), \\
\forall \epsilon \in (2 \beta,1)
\end{array}$ & Thm \ref{Thm5} \\
\hline

\multirow{2}{*}{\shortstack{Smooth,\\strongly\\ convex}} & 
\ref{Ass1},\ref{Ass4},\ref{Ass5},\ref{Ass6},\ref{Ass7} &
$\begin{cases}
\{ \frac{|f(\theta^{t} + h^{-t} s_t) - f(\theta^t)|}{Lh^{-t}} \}_{t \ge 1}\\
h \text{ is large enough}
\end{cases}$ &
$ f(\theta^T)-f(\theta^*)$ &
$\begin{array}{l}
\mathbb{E}/O( (1- \beta)^T ) \\
 \beta\sim \frac{\mu}{dL}
\end{array}$

 & Thm \ref{Thm7} \\

& \ref{Ass1},\ref{Ass4},\ref{Ass5},\ref{Ass6},\ref{Ass7} & 
$\begin{cases}
\{ \frac{|f(\theta^{t} + h^{-t} s_t) - f(\theta^t)|}{Lh^{-t}} \}_{t \ge 1}\\
h \text{ is large enough}
\end{cases}$ &
$ f(\theta^T)-f(\theta^*)$ & 
$\begin{array}{l}
\text{a.s.}\; / o((1- \beta)^T) \\
 \beta\sim \frac{s \mu}{dL}; \forall s \in (0,1)
\end{array}$
& Thm \ref{Thm6} \\
\hline
\end{tabular}
\label{Table1}
\end{table}
\section{Problem setup and assumptions}\label{sec2}
We are interested in the following optimization problem:
$$\min_{\theta \in \mathbb{R}^d} f(\theta),$$
where the objective function $f : \mathbb{R}^d \mapsto \mathbb{R} $ is differentiable and bounded from below. In this context,
we work within the constraint of not having access to the derivatives of 
$f$, relying exclusively on a function evaluation oracle. 

Throughout the rest of the paper, we assume that the objective function is differentiable and bounded from below. We consider the following additional assumptions about $f$:
\begin{assumption}\label{Ass1}
$f$ is $L-$smooth, i.e., 
$\forall x,y \in \mathbb{R}^d,\; || \nabla f(x)-\nabla f(y) ||_2 \le  L \lVert x-y \rVert_2.$
\end{assumption}

Note that \cref{Ass1}, implies the following result \cite[Lemma 1.2.3]{nesterov2013introductory}: 
\begin{equation}\label{ineq-smooth}
\forall x,y \in \mathbb{R}^d,\; |f(y) - f(x)- \langle \nabla f(x),y-x \rangle | \le \frac{L}{2} \lVert y-x\rVert_{2}^2.
\end{equation}
\begin{assumption}\label{Ass2}
 $\exists \theta^{*}  \in \mathbb{R}^d, \; f(\theta^{*})=\inf_{\theta \in \mathbb{R}^d } f(\theta)$.
\end{assumption}

\begin{assumption}\label{Ass3}
 $f$ is convex and there exists $ c \in \mathbb{R}^d$ such that the sublevel set of $f$ defined by $c$ is bounded, i.e.,
\begin{enumerate}
    \item $\forall x,y \in \mathbb{R}^d,\;  f(y) \ge f(x)+\langle \nabla f(x),y-x \rangle$.
    \item There exists $c \in \mathbb{R}^d$ such that $L( c)=\{x \in \mathbb{R}^d  \mid f(x) \leq f( c) \}$ is bounded.
\end{enumerate} 
\end{assumption}
\begin{assumption}\label{Ass4}
$f$ is $\mu$-strongly convex, i.e., 
there exists a positive constant $\mu$ such that: 
$$\forall x,y \in \mathbb{R}^d,\; f(y) \ge f(x)+ \langle \nabla f(x),y-x \rangle + \frac{\mu}{2} \lVert y-x\rVert_2^2.$$    
\end{assumption}

Note that  \cref{Ass4}, implies the following result \cite[Theorem 2.1.8]{nesterov2013introductory}: 
$$\forall x \in \mathbb{R}^d,\; \frac{1}{2 \mu} \lVert \nabla f(x) \rVert_2^2 \ge f(x)-\inf_{y \in \mathbb{R}^d} f(y)\;\;\;\; \text{(Polyak-{\L}ojasiewicz inequality)}.$$
For the distributions $\mathcal{D}$ over $\mathbb{R}^d$, we make the following assumptions:  
\begin{assumption}\label{Ass5}
    The probability distribution $\mathcal{D}$ on $\mathbb{R}^d$ satisfies:
\begin{enumerate}
    \item $\gamma_{\mathcal{D}}:=\mathbb{E}_{s \sim \mathcal{D}}[ \lVert s \rVert_2^2 ]<\infty$.
    \item There exists a norm $\lVert .  \rVert_{\mathcal{D}}$ on $\mathbb{R}^d$, for which we can find a  constant $\mu_\mathcal{D}>0$ such that:
    $$\forall v \in \mathbb{R}^d,\;  \mathbb{E}_{s \sim \mathcal{D} } | \langle v,s \rangle | \ge \mu_{\mathcal{D}} \lVert v\rVert_\mathcal{D}.$$
\end{enumerate}
\end{assumption}
In \cite[Lemma 3.4]{bergou2020stochastic}, the validity of \cref{Ass5} has been established for several distributions including:

 \begin{enumerate}[label=(\roman*)]   
    \item \label{NorDis} For the normal distribution with zero mean and the  identity matrix over $d$ as covariance matrix, i.e., $\mathcal{D} \sim N(0,\frac{I_d}{d})$:
    \[
    \begin{cases}
        \gamma_{\mathcal{D}}=1, \\
        \mathbb{E}_{s \sim \mathcal{D}}|\langle v, s\rangle|=\frac{\sqrt{2}}{\sqrt{d \pi}}\|v\|_2.
    \end{cases}
    \]
     \item \label{unit-sphere}  For the uniform distribution on the unit sphere in \( \mathbb{R}^d \):
$$
    \begin{cases}
    \gamma_{\mathcal{D}}=1,\\
    \mathbb{E}_{s \sim \mathcal{D}} \left| \langle g, s \rangle \right| \sim \frac{1}{\sqrt{2 \pi d}} \|g\|_2.
    \end{cases}
$$
\end{enumerate}

\begin{assumption}\label{Ass6}
 For all $s \sim \mathcal{D}: \mathbb{P}\left( ||s||_2 \le  1 \right)=1.$     
\end{assumption}
Note that under~\Cref{Ass6}, we have $\gamma_{\mathcal{D}}\le 1$. 
Finally, we add the following assumption regarding $\mu_{\mathcal{D}}$ involved in \cref{Ass5}:

\begin{assumption}\label{Ass7}
  $\mu_\mathcal{D} < 1.$  
\end{assumption}

\begin{Remark}\label{Rem1}
    In \Cref{sec5}, we modify the second condition of \Cref{Ass5} by replacing it with:

There exists a constant $\mu_\mathcal{D}>0$ such that: $\forall v \in \mathbb{R}^d,\;  \mathbb{E}_{s \sim \mathcal{D} } | \langle v,s \rangle | \ge \mu_{\mathcal{D}} \lVert v\rVert_2 $.
\end{Remark} 
Since norms are equivalent on $\mathbb{R}^d$, this condition is equivalent to the second condition of \Cref{Ass5}. 
We note also that \Cref{Ass7} is satisfied  for  distribution distribution \ref{NorDis}.

Throughout the paper, the abbreviation ``a.s'' stands for ``almost surely''.

\section{Convergence analysis for the class of smooth functions}
\subsection{Convergence analysis for the best iterate}
In this subsection, we will assume that \Cref{Ass1,Ass5,Ass6}  hold true. 
Under these assumptions, we establish
that for any $\epsilon > 0$, when $\{\theta^t\}_{t \ge 1}$ is generated by the STP algorithm using the step size sequence $\{\frac{\alpha}{t^{\frac{1}{2}+\epsilon}} \}_{t \ge 1}$ with $\alpha>0$, it follows that $\min_{1 \le t \le T} \lVert \nabla f(\theta^t) \rVert$ converges almost surely to $0$ at a rate of $o(\frac{1}{T^{\frac{1}{2}-\epsilon }} )$. This result is provided by \Cref{Thm1}, which follows from the first finding of \Cref{Lemma2} that ensures that: $\sum_{t=1}^{\infty} \frac{1}{t^{\frac{1}{2}+\epsilon}} \mathbb{E}\left[ \lVert \nabla f(\theta^t) \rVert_{\mathcal{D}}\right] <\infty.$

\begin{lemma}\label{Lemma2}Assume that \Cref{Ass1,Ass5,Ass6} hold true. Let \(\{\alpha_t\}_{t \ge 1}\) be a sequence of step sizes satisfying \(\sum_{t=1}^{\infty} \alpha_t^2 < \infty\). Let \(\{\theta^t\}_{t \ge 1}\) be a sequence generated by \Cref{Alg1}. Then, the following results hold:
\[
\begin{cases}
 \sum_{t=1}^{\infty} \alpha_t  \mathbb{E}\left[ \lVert \nabla f(\theta^t) \rVert_{\mathcal{D}} \right] < \infty, \\
    \sum_{t=1}^{\infty} \alpha_t \lVert \nabla f(\theta^t) \rVert_{\mathcal{D}} < \infty \quad \text{a.s.}
\end{cases}
\]

\end{lemma}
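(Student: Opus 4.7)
The plan is to derive a one-step supermartingale-type inequality for the sequence $\{f(\theta^t)\}_{t\ge 1}$ and then apply (i) a telescoping/total expectation argument for the summability in expectation, and (ii) the Robbins--Siegmund almost-sure convergence theorem for the pathwise summability.

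First, I would exploit the smoothness inequality \eqref{ineq-smooth} applied at $x=\theta^t$ and $y=\theta^t\pm\alpha_t s_t$ to obtain
\begin{equation*}
f(\theta^t\pm\alpha_t s_t)\;\le\; f(\theta^t)\pm\alpha_t\langle\nabla f(\theta^t),s_t\rangle+\tfrac{L\alpha_t^2}{2}\lVert s_t\rVert_2^2.
\end{equation*}
Since the STP update picks the minimum among $\theta^t,\theta^t+\alpha_t s_t,\theta^t-\alpha_t s_t$, taking the minimum of the two bounds above yields
\begin{equation*}
f(\theta^{t+1})\;\le\; f(\theta^t)-\alpha_t|\langle\nabla f(\theta^t),s_t\rangle|+\tfrac{L\alpha_t^2}{2}\lVert s_t\rVert_2^2.
\end{equation*}
Let $\mathcal{F}_t=\sigma(s_1,\dots,s_{t-1})$ so that $\theta^t$ is $\mathcal{F}_t$-measurable and $s_t$ is independent of $\mathcal{F}_t$. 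Taking conditional expectation and using \Cref{Ass5} (the directional bound $\mathbb{E}|\langle v,s\rangle|\ge\mu_{\mathcal{D}}\|v\|_{\mathcal{D}}$) together with \Cref{Ass6} (which gives $\mathbb{E}[\lVert s_t\rVert_2^2]=\gamma_{\mathcal{D}}\le 1$) gives the key recursion
\begin{equation*}
\mathbb{E}\!\left[f(\theta^{t+1})\,\big|\,\mathcal{F}_t\right]\;\le\; f(\theta^t)-\alpha_t\mu_{\mathcal{D}}\lVert\nabla f(\theta^t)\rVert_{\mathcal{D}}+\tfrac{L\gamma_{\mathcal{D}}}{2}\alpha_t^2.
\end{equation*}

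For the first (in-expectation) claim, I would take total expectation, rearrange to isolate $\alpha_t\mu_{\mathcal{D}}\mathbb{E}[\lVert\nabla f(\theta^t)\rVert_{\mathcal{D}}]$, and telescope from $t=1$ to $T$. Using that $f$ is bounded below by some $f^\star$ so that $\mathbb{E}[f(\theta^{T+1})]\ge f^\star$, I get
\begin{equation*}
\mu_{\mathcal{D}}\sum_{t=1}^{T}\alpha_t\,\mathbb{E}\!\left[\lVert\nabla f(\theta^t)\rVert_{\mathcal{D}}\right]\;\le\; f(\theta^1)-f^\star+\tfrac{L\gamma_{\mathcal{D}}}{2}\sum_{t=1}^{T}\alpha_t^2,
\end{equation*}
and letting $T\to\infty$, the hypothesis $\sum\alpha_t^2<\infty$ yields the desired summability in expectation.

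For the second (almost-sure) claim, I would apply the Robbins--Siegmund theorem to the nonnegative process $U_t:=f(\theta^t)-f^\star\ge 0$. The recursion above rewrites as
\begin{equation*}
\mathbb{E}[U_{t+1}\mid\mathcal{F}_t]\;\le\; U_t+\tfrac{L\gamma_{\mathcal{D}}}{2}\alpha_t^2-\alpha_t\mu_{\mathcal{D}}\lVert\nabla f(\theta^t)\rVert_{\mathcal{D}},
\end{equation*}
which fits the Robbins--Siegmund template with deterministic additive term $\tfrac{L\gamma_{\mathcal{D}}}{2}\alpha_t^2$ (summable by assumption) and nonnegative subtractive term $\alpha_t\mu_{\mathcal{D}}\lVert\nabla f(\theta^t)\rVert_{\mathcal{D}}$. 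The conclusion of the theorem then simultaneously provides almost-sure convergence of $U_t$ and the finiteness $\sum_{t\ge 1}\alpha_t\lVert\nabla f(\theta^t)\rVert_{\mathcal{D}}<\infty$ a.s., which is the second claim after dividing by $\mu_{\mathcal{D}}>0$.

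The only mildly delicate point, rather than an obstacle, is checking independence: one must verify that $s_t$ is independent of $\theta^t$ so the directional lower bound in \Cref{Ass5} applies with the (random) vector $v=\nabla f(\theta^t)$. This is immediate from the fact that $\theta^t$ depends only on $s_1,\dots,s_{t-1}$ whereas $s_t\sim\mathcal{D}$ is drawn fresh. Everything else is a direct combination of the smoothness inequality with standard supermartingale tools.
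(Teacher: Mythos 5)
Your proof is correct. The first half coincides with the paper's argument: the one-step recursion you derive from the smoothness inequality and the three-point minimum is exactly the paper's \Cref{Lemma1} (quoted there from Bergou et al.), and the telescoping-plus-lower-bound argument for $\sum_t \alpha_t \mathbb{E}[\lVert \nabla f(\theta^t)\rVert_{\mathcal{D}}] < \infty$ is the same. Where you diverge is the almost-sure claim: you invoke the Robbins--Siegmund theorem on $U_t = f(\theta^t) - f^\star$, whereas the paper deduces it directly from the first claim by Tonelli's theorem --- since the summands $\alpha_t \lVert \nabla f(\theta^t)\rVert_{\mathcal{D}}$ are nonnegative, $\mathbb{E}\bigl[\sum_t \alpha_t \lVert \nabla f(\theta^t)\rVert_{\mathcal{D}}\bigr] = \sum_t \alpha_t \mathbb{E}[\lVert \nabla f(\theta^t)\rVert_{\mathcal{D}}] < \infty$, and a nonnegative random variable with finite expectation is a.s.\ finite. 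Your route is heavier machinery for the same conclusion, but it is self-contained (it does not pass through the in-expectation statement) and would survive in settings where the additive noise term $\xi_t$ is only a.s.\ summable rather than deterministically summable; the paper's route is the more elementary one here and is all that the lemma requires. Your remark about the independence of $s_t$ from $\mathcal{F}_t$ is the right point to check and is handled the same way implicitly in the paper's conditional-expectation step.
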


\vspace{0.1cm}
In the Appendix (Lemma \ref{Lem_Reb}), we prove that if \( \{X_t\}_{t \geq 1} \) is a sequence of nonnegative real numbers that is non-increasing and converges to 0, and \( \{ \alpha_t \}_{t \geq 1} \) is a sequence of real numbers such that \( \sum_{t=1}^{\infty} \alpha_t X_t \) converges, then \( X_T \) converges to 0 at a rate of \( o\left(1 /\sum_{t=1}^T \alpha_t \right) \). As a result, since \( \{\min_{t \leq T} \|\nabla f(\theta^t)\|_{\mathcal{D}} \}_{T \geq 1} \) satisfies the conditions of this lemma when \(\sum_{t=1}^{\infty} \alpha_t^2<\infty \)  and \(\sum_{t=1}^{\infty} \alpha_t = \infty\), we conclude that in this case, the best gradient iterate converges to $0$ at a rate of \( o\left(1 /\sum_{t=1}^T \alpha_t \right) \). This result is formally presented in \Cref{Thm1}.

\begin{theorem}\label{Thm1}
Assume that \Cref{Ass1,Ass5,Ass6} hold. Let \( \{\theta^t\}_{t \ge 1} \) be a sequence generated by \Cref{Alg1}, where the step size sequence \( \{\alpha_t\}_{t \ge 1} \) satisfies the following conditions:
\[
\begin{cases}
    \sum_{t=1}^{\infty} \alpha_t^2 < \infty, \\
    \sum_{t=1}^{\infty} \alpha_t = \infty.
\end{cases}
\]
Then, we have:
\[
\min_{1 \leq t \leq T} \lVert \nabla f(\theta^t) \rVert_{\mathcal{D}} = o\left(\frac{1}{\sum_{t=1}^{T} \alpha_t}\right) \quad \text{a.s.}
\]
In particular, if we choose \( \alpha_t = \frac{\alpha}{t^{\frac{1}{2} + \epsilon}} \) with \( \alpha > 0 \) and \( \epsilon \in \left(0, \frac{1}{2}\right) \), it follows that:
\[
\min_{1 \leq t \leq T} \lVert \nabla f(\theta^t) \rVert_{\mathcal{D}} = o\left(\frac{1}{T^{\frac{1}{2} - \epsilon}}\right) \quad \text{a.s.}
\]

\end{theorem}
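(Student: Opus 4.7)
The plan is to invoke the two key tools from the surrounding text: \Cref{Lemma2}, which supplies almost sure summability of $\alpha_t\lVert\nabla f(\theta^t)\rVert_{\mathcal{D}}$, and Lemma~\ref{Lem_Reb} (the Kronecker-type rate lemma proved in the appendix), which converts such summability into an explicit $o$-rate for a non-increasing sequence. With these, the argument should be almost mechanical: I just need to verify that the running minimum $X_T := \min_{1\le t\le T}\lVert\nabla f(\theta^t)\rVert_{\mathcal{D}}$ meets the hypotheses of Lemma~\ref{Lem_Reb} on the event of full probability produced by \Cref{Lemma2}.

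First I would fix an $\omega$ in the almost sure event where $\sum_{t\ge 1}\alpha_t\lVert\nabla f(\theta^t(\omega))\rVert_{\mathcal{D}}<\infty$, which is valid under \Cref{Ass1,Ass5,Ass6} together with $\sum\alpha_t^2<\infty$ thanks to \Cref{Lemma2}. Define $X_T := \min_{1\le t\le T}\lVert\nabla f(\theta^t(\omega))\rVert_{\mathcal{D}}$. This sequence is nonnegative and non-increasing by construction, and since $X_t\le\lVert\nabla f(\theta^t(\omega))\rVert_{\mathcal{D}}$ we immediately get $\sum_{t\ge 1}\alpha_t X_t<\infty$. To apply Lemma~\ref{Lem_Reb} I still need $X_T\to 0$: but if $X_T$ stayed bounded away from some $c>0$ (which, by monotonicity, is the only way it could fail to vanish), then $\sum_{t\ge 1}\alpha_t X_t\ge c\sum_{t\ge 1}\alpha_t=\infty$ by the second hypothesis on the step sizes, contradicting the summability just established. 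Hence $X_T\to 0$, and Lemma~\ref{Lem_Reb} yields $X_T=o\!\left(1/\sum_{t=1}^T\alpha_t\right)$ on the chosen event, which is exactly the claimed almost sure rate.

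For the specialization, I would set $\alpha_t=\alpha/t^{1/2+\epsilon}$ with $\epsilon\in(0,1/2)$ and check the two summability conditions: $\sum_{t\ge 1}\alpha_t^2=\alpha^2\sum_{t\ge 1}t^{-1-2\epsilon}<\infty$ and $\sum_{t\ge 1}\alpha_t=\alpha\sum_{t\ge 1}t^{-(1/2+\epsilon)}=\infty$ since $1/2+\epsilon<1$. An elementary integral comparison gives
\[
\sum_{t=1}^{T}\frac{1}{t^{1/2+\epsilon}}\;=\;\Theta\!\left(T^{1/2-\epsilon}\right),
\]
so $1/\sum_{t=1}^T\alpha_t=\Theta\!\left(T^{-(1/2-\epsilon)}\right)$, and substituting into the general statement produces the advertised $o\!\left(1/T^{1/2-\epsilon}\right)$ rate.

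I do not expect a genuine obstacle here, because both \Cref{Lemma2} and Lemma~\ref{Lem_Reb} have been stated with precisely the features needed. The only point that requires a moment's care is the argument that $X_T\to 0$ almost surely (rather than merely that $\liminf X_T=0$), and this follows cleanly from monotonicity combined with $\sum\alpha_t=\infty$. Everything else is bookkeeping.
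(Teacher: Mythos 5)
Your proposal is correct and follows essentially the same route as the paper's proof: both invoke \Cref{Lemma2} for the almost sure summability of $\alpha_t\lVert\nabla f(\theta^t)\rVert_{\mathcal{D}}$, verify that the running minimum is nonnegative, non-increasing, and tends to zero (your contradiction argument and the paper's boundedness argument both rest on $\sum\alpha_t=\infty$), and then apply Lemma~\ref{Lem_Reb} followed by the standard Riemann-sum asymptotics for the specialization. No gaps.
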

In \cite[Theorem 4.1]{bergou2020stochastic}, the authors established that by using the STP algorithm with a step size sequence \(\left\{\frac{\alpha}{\sqrt{t}}\right\}_{t \ge 1}\), where \(\alpha > 0\), the best gradient iterate converges to 0 in expectation at a rate of \(O\left(\frac{\sqrt{d}}{\sqrt{T}}\right)\). The second result of \Cref{Thm1} provides a similar version of this result almost surely, where both the step sizes and convergence rates are roughly similar.

\noindent
\begin{Remark}\label{Rem3}
Since all norms are equivalent in finite dimension, for any norm \( \lVert \cdot \rVert \) on \( \mathbb{R}^d \), we can conclude that by selecting \( \alpha_t = \frac{\alpha}{t^{\frac{1}{2} + \epsilon}} \), where \( \alpha > 0 \) and \( \epsilon \in \left(0, \frac{1}{2}\right) \), the following holds:
\[
\min_{1 \le t \le T} \lVert \nabla f(\theta^t) \rVert = o\left(\frac{1}{T^{\frac{1}{2}-\epsilon}}\right) \quad \text{a.s.}
\]

\end{Remark}
\begin{Remark}\label{Rem4}
In the non-convex setting, the convergence analysis in the previous theorem implies
that $\min_{1 \leq t \leq T} \lVert \nabla f(\theta^t) \rVert$ converges to zero almost surely. However, it remains uncertain whether the gradient of the last iterate $||\nabla f(\theta^T)||$ also converges almost surely to $0$.  In \cref{seclast}, we will establish the convergence of the last iterate of the gradient, both almost surely and in expectation.
\end{Remark}

\subsection{Convergence analysis for the final iterate}\label{seclast}
In this subsection, we will assume that \Cref{Ass1,Ass5,Ass6} hold true. 
Under these assumptions,
we establish that the STP algorithm ensures the almost sure convergence of $||\nabla f(\theta^T)||$ to 0 and the convergence of $\mathbb{E}[ ||\nabla f(\theta^T) ||]$ to 0. This result holds for any step size sequence $\{\alpha_t\}_{t \ge 1}$ such that: $\sum_{t=1}^{\infty} \alpha_t^2 <\infty$ and $\sum_{t =1}^{\infty} \alpha_t=\infty$.
The almost sure convergence result is provided by \Cref{Thm2}, while the convergence in expectation is established by \Cref{Thm3}. Notably, both of these theorems are derived from \Cref{Lemma2} and  \Cref{Lemma3}. (see the Appendix).

\begin{theorem}\label{Thm2}
Assume that \Cref{Ass1,Ass5,Ass6} hold true. Suppose that the step size sequence satisfies:
\[
\begin{cases}
    \sum_{t=1}^{\infty} \alpha_t^2 < \infty, \\
    \sum_{t=1}^{\infty} \alpha_t = \infty.
\end{cases}
\]
Let \( \{\theta^t\}_{t \ge 1} \) be a sequence generated by \Cref{Alg1}. Then, we have:
\[
\lim_{T \to \infty} \lVert \nabla f(\theta^T) \rVert_{\mathcal{D}} = 0 \quad \text{a.s.}
\]

\end{theorem}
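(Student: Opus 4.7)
The plan is to upgrade the almost sure summability provided by \Cref{Lemma2}, namely $\sum_{t=1}^{\infty} \alpha_t \lVert \nabla f(\theta^t) \rVert_{\mathcal{D}} < \infty$ a.s., to a genuine limit statement by exploiting an $L$-smoothness based control on how fast $\lVert \nabla f(\theta^t) \rVert_{\mathcal{D}}$ can change between consecutive iterations. The key observation is that the STP update always satisfies $\lVert \theta^{t+1}-\theta^t \rVert_2 \le \alpha_t \lVert s_t \rVert_2 \le \alpha_t$ by \Cref{Ass6}, which combined with \Cref{Ass1} yields the per-step gradient bound $\lVert \nabla f(\theta^{t+1}) - \nabla f(\theta^t) \rVert_2 \le L \alpha_t$.

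First I would note that, since $\sum_t \alpha_t^2 < \infty$, \Cref{Lemma2} gives $S := \sum_{t=1}^{\infty} \alpha_t \lVert \nabla f(\theta^t) \rVert_{\mathcal{D}} < \infty$ a.s. Together with $\sum_t \alpha_t = \infty$, this forces $\liminf_{t \to \infty} \lVert \nabla f(\theta^t) \rVert_{\mathcal{D}} = 0$ almost surely, for otherwise $S$ would dominate a positive constant times $\sum_t \alpha_t = \infty$. So the nontrivial task is to rule out oscillations.

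To do so, I would argue by contradiction. Suppose, on an event of positive probability, $\limsup_t \lVert \nabla f(\theta^t)\rVert_{\mathcal{D}} \ge 2\epsilon$ for some $\epsilon > 0$. Since also $\liminf = 0$, on this event one can extract infinitely many disjoint blocks $[m_k, n_k]$ with $\lVert \nabla f(\theta^{m_k}) \rVert_{\mathcal{D}} \le \epsilon$, $\lVert \nabla f(\theta^{n_k}) \rVert_{\mathcal{D}} \ge 2\epsilon$, and $\lVert \nabla f(\theta^t) \rVert_{\mathcal{D}} > \epsilon$ for $m_k < t < n_k$. Using the equivalence of the norms $\lVert \cdot \rVert_{\mathcal{D}}$ and $\lVert \cdot \rVert_2$ on $\mathbb{R}^d$, combined with telescoping and the per-step gradient bound above,
\[
\epsilon \le \lVert \nabla f(\theta^{n_k}) - \nabla f(\theta^{m_k}) \rVert_{\mathcal{D}} \le C L \sum_{t=m_k}^{n_k - 1} \alpha_t
\]
for some constant $C$. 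Since $\alpha_t \to 0$ (an automatic consequence of $\sum \alpha_t^2 < \infty$), $\alpha_{m_k} \le \epsilon/(2CL)$ eventually, so $\sum_{t=m_k+1}^{n_k-1} \alpha_t \ge \epsilon/(2CL)$. Using $\lVert \nabla f(\theta^t)\rVert_{\mathcal{D}} > \epsilon$ on this range,
\[
\sum_{t=m_k+1}^{n_k-1} \alpha_t \lVert \nabla f(\theta^t)\rVert_{\mathcal{D}} \ge \frac{\epsilon^2}{2CL}.
\]
Summing over the infinitely many disjoint blocks contradicts $S < \infty$, and thus $\lim_{T\to\infty} \lVert \nabla f(\theta^T)\rVert_{\mathcal{D}} = 0$ a.s.

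The main obstacle is precisely this last bridging step: \Cref{Lemma2} on its own yields only $\liminf = 0$, and the upgrade to full convergence requires a careful oscillation argument. The crucial enabling fact is that \Cref{Ass6} provides a \emph{deterministic} per-step displacement bound $\lVert \theta^{t+1}-\theta^t\rVert_2 \le \alpha_t$, so $L$-smoothness directly bounds the gradient increment along any block of consecutive iterates; without such a uniform control, the crossings argument would collapse.
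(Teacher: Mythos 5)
Your proof is correct and follows essentially the same route as the paper: it combines the almost sure summability $\sum_t \alpha_t \lVert \nabla f(\theta^t)\rVert_{\mathcal{D}} < \infty$ from \Cref{Lemma2} with the deterministic per-step increment bound $\bigl|\lVert\nabla f(\theta^{t+1})\rVert_{\mathcal{D}} - \lVert\nabla f(\theta^t)\rVert_{\mathcal{D}}\bigr| \le CL\alpha_t$ obtained from \Cref{Ass1} and \Cref{Ass6}, exactly as the paper does. The only difference is presentational: the paper delegates the final ``no oscillation'' step to the deterministic \Cref{Lemma3} (proved there by a closely related crossing argument), whereas you re-derive that step inline via an up-crossing contradiction, which is a minor variant of the same idea.
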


\begin{theorem}\label{Thm3}
Assume that \Cref{Ass1,Ass5,Ass6} hold true. Suppose that the step size sequence satisfies:
\[
\begin{cases}
    \sum_{t=1}^{\infty} \alpha_t^2 < \infty, \\
    \sum_{t=1}^{\infty} \alpha_t = \infty.
\end{cases}
\]
Let \( \{\theta^t\}_{t \ge 1} \) be a sequence generated by \Cref{Alg1}. Then, we have:
\[
\lim\limits_{T \rightarrow +\infty} \mathbb{E}[ \lVert \nabla f(\theta^T) \rVert_{\mathcal{D}}]=0 .
\]    
\end{theorem}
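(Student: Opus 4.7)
The strategy is to combine the summability already provided by \Cref{Lemma2} with a Lipschitz-in-expectation control on $a_t := \mathbb{E}[\lVert \nabla f(\theta^t)\rVert_{\mathcal{D}}]$, and then appeal to a deterministic fact that forces a nonnegative sequence enjoying both properties to vanish.

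First I would collect the two ingredients. \Cref{Lemma2} directly supplies $\sum_{t\ge 1}\alpha_t\, a_t < \infty$. For the Lipschitz bound, the STP update together with \Cref{Ass6} yields $\lVert \theta^{t+1}-\theta^t\rVert_2 \le \alpha_t\lVert s_t\rVert_2 \le \alpha_t$ almost surely, and $L$-smoothness (\Cref{Ass1}) then gives $\lVert \nabla f(\theta^{t+1})-\nabla f(\theta^t)\rVert_2 \le L\alpha_t$. The reverse triangle inequality for $\lVert\cdot\rVert_{\mathcal{D}}$ together with the equivalence of norms on $\mathbb{R}^d$ (there is $C_{\mathcal{D}}>0$ with $\lVert v\rVert_{\mathcal{D}}\le C_{\mathcal{D}}\lVert v\rVert_2$) produces $\bigl|\lVert \nabla f(\theta^{t+1})\rVert_{\mathcal{D}}-\lVert \nabla f(\theta^t)\rVert_{\mathcal{D}}\bigr|\le C_{\mathcal{D}}L\,\alpha_t$ a.s., and taking expectations yields $|a_{t+1}-a_t|\le C_{\mathcal{D}}L\,\alpha_t$.

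The core step is then a deterministic lemma I would state and prove: if $a_t\ge 0$ satisfies $|a_{t+1}-a_t|\le K\alpha_t$, $\sum_t\alpha_t a_t<\infty$, $\sum_t\alpha_t=\infty$, and $\alpha_t\to 0$ (which follows from $\sum\alpha_t^2<\infty$), then $a_t\to 0$. The proof is by contradiction via a crossing argument. First, $\liminf_t a_t=0$, since otherwise $\sum\alpha_t a_t=\infty$. If $\limsup_t a_t=\ell>0$, one extracts interlaced indices $u_k<v_k<u_{k+1}$ with $a_{u_k}<\ell/3$ and $a_{v_k}>2\ell/3$. Let $\tau_k$ be the last index in $[u_k,v_k-1]$ with $a_{\tau_k}\le\ell/3$. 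The Lipschitz bound gives $\sum_{t=\tau_k}^{v_k-1}\alpha_t\ge \ell/(3K)$, and since $\alpha_{\tau_k}\to 0$ one may arrange $\sum_{t=\tau_k+1}^{v_k-1}\alpha_t\ge \ell/(6K)$ for $k$ large. But on $\{\tau_k+1,\ldots,v_k-1\}$ one has $a_t>\ell/3$, so $\sum_{t=\tau_k+1}^{v_k-1}\alpha_t a_t\ge \ell^2/(18K)$. The intervals $[\tau_k+1,v_k-1]$ are pairwise disjoint because $v_k\le \tau_{k+1}$, and summing over $k$ contradicts $\sum_t\alpha_t a_t<\infty$. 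Applying this lemma to $a_t=\mathbb{E}[\lVert \nabla f(\theta^t)\rVert_{\mathcal{D}}]$ gives the claim.

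The main obstacle is the deterministic lemma itself: summability $\sum\alpha_t a_t<\infty$ on its own is consistent with spiky sequences that fail to vanish, so the Lipschitz control of the previous step is essential, and the crossing argument requires careful bookkeeping to guarantee that the intervals contributing to $\sum\alpha_t a_t$ are pairwise disjoint and each carries a uniformly positive step-size mass. The remaining ingredients---the reverse triangle inequality, $L$-smoothness, and \Cref{Ass6}---are routine, and I expect this plan to produce a short proof that reuses \Cref{Lemma2} as a black box.
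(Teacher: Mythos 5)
Your proposal is correct and follows the same route as the paper: both proofs reduce the theorem to (i) the summability $\sum_t \alpha_t\,\mathbb{E}[\lVert\nabla f(\theta^t)\rVert_{\mathcal{D}}]<\infty$ from \Cref{Lemma2}, (ii) the bound $\bigl|\mathbb{E}[\lVert\nabla f(\theta^{t+1})\rVert_{\mathcal{D}}]-\mathbb{E}[\lVert\nabla f(\theta^t)\rVert_{\mathcal{D}}]\bigr|\le CL\alpha_t$ obtained exactly as you describe (Jensen, reverse triangle inequality, $L$-smoothness, \Cref{Ass6}, and norm equivalence), and (iii) a deterministic lemma asserting that a nonnegative sequence $b_t$ with $\sum a_tb_t<\infty$, $\sum a_t=\infty$, and $|b_{t+1}-b_t|\le Ka_t$ must tend to $0$ --- this is precisely the paper's \Cref{Lemma3}. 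The only genuine divergence is in how that auxiliary lemma is proved: you use a crossing argument (extract interlaced indices around a putative positive $\limsup$, show each upcrossing costs step-size mass at least $\ell/(3K)$ by the Lipschitz bound, and sum the disjoint contributions to contradict $\sum a_tb_t<\infty$), whereas the paper gives a direct tail-sum argument: for any $\epsilon$, pick $n_0$ with $\sum_{k\ge n_0}a_kb_k\le \epsilon^2/(4K)$ and bound $|b_m-b_n|\le \frac{2K}{\epsilon}\sum_{i\ge n}a_ib_i\le\epsilon/2$, where $m$ is the first index after $n$ with $b_m\le\epsilon/2$. Both are valid; your version additionally uses $\alpha_t\to 0$ (to discard the single endpoint term $\alpha_{\tau_k}$), which is harmless here since $\sum\alpha_t^2<\infty$, while the paper's version of the lemma does not need it and is thus marginally more general. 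Your bookkeeping for the crossing argument (disjointness of the intervals $[\tau_k+1,v_k-1]$ and the uniform lower bound $\ell^2/(18K)$ per crossing) checks out.
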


\begin{Remark}\label{Rem5}
   In particular, for any $\epsilon \in (0,\frac{1}{2})$, the step size sequence $\{\frac{\alpha}{t^{\frac{1}{2}+\epsilon}} \}_{t \ge 1}$ with $\alpha>0$, satisfies the conditions on step sizes of \Cref{Thm2,Thm3}.
   \end{Remark}
\section{Convergence analysis for the class of smooth convex functions}
In this section we will assume that \Cref{Ass1,Ass2,Ass3,Ass5,Ass6}  hold true. 
Since \( f \) is a real-valued, continuous, and convex function, it follows that \( f \) is a closed proper convex function. Additionally, \Cref{Ass3} guarantees the existence of a vector \( c \) such that the sublevel set of $f$ defined by \( c \) is bounded. Therefore, we can deduce that all sublevel sets of \( f \) are bounded, as shown in \citep[Corollary 8.7.1]{rockafellar2015convex}. 
Let \( \theta^1 \) be the initial vector of the STP algorithm. In particular, the sublevel set $L(\theta^1)$ is bounded, and it forms a compact set of \( \mathbb{R}^d \) (because $f$ is continuous).

Let's denote $\|\cdot\|_{\mathcal{D}}^*$ as the dual norm of $\|\cdot\|_\mathcal{D}$, defined for all $\theta \in \mathbb{R}^d$ by:
$
\|\theta\|_{\mathcal{D}}^* = \sup_{v \in \mathbb{R}^{d} \setminus \{0\}} \frac{\langle v,\theta \rangle}{\|v\|_{\mathcal{D}}}$. 
Since $\theta \mapsto \|\theta-\theta^{*}\|_{\mathcal{D}}^*$ is continuous over $\mathbb{R}^d$ and $L(\theta^1)$ is a compact subset of $\mathbb{R}^d$, we have:  $$R:=\sup_{\theta \in L(\theta^1)} \|\theta-\theta^{*}\|_{\mathcal{D}}^{*} < \infty.$$
Since $f$ is convex, we have that for all $\theta \in L(\theta^1)$:
$$
    f(\theta)-f(\theta^{*}) \le \langle \nabla f(\theta),\theta-\theta^{*} \rangle 
    \le \|\nabla f(\theta)\|_{\mathcal{D}} \underbrace{\sup_{v \in \mathbb{R}^{d} \setminus \{0\}} \frac{\langle v, \theta-\theta^{*} \rangle}{\|v\|_{\mathcal{D}}}}_{\|\theta-\theta^{*} \|_{\mathcal{D}}^*} 
    \le R \|\nabla f(\theta)\|_{\mathcal{D}}.
$$

By the construction of the STP algorithm, for all \( t \ge 1 \), \( f(\theta^t) \le f(\theta^1) \). Therefore, for all \( t \ge 1 \), \( \theta^t \in L(\theta^1) \), and thus we have:
\begin{equation}\label{eq-cvx}
 \forall t \ge 1, \; f(\theta^t)-f(\theta^{*}) \le R \| \nabla f(\theta^t)\|_{\mathcal{D}}.   
\end{equation}

This final result serves as a crucial point for the convergence analysis of \Cref{Thm4} and \Cref{Thm5}.

The following~\Cref{Thm4,Thm5}, show the convergence of the final iterate $f(\theta^T)$ to the optimal value with a rate $O(d/T)$ in expectation, and a rate approximately $o(1/T)$ almost surely.

\begin{theorem}\label{Thm4}
Assume that \cref{Ass1,Ass2,Ass3,Ass5,Ass6} hold true, and consider a sequence \( \{\theta^t\}_{t \geq 1} \) generated by \Cref{Alg1}, where the step size sequence is defined as \( \left\{ \frac{\alpha}{t} \right\}_{t \geq 1} \) with \( \alpha > \frac{R}{\mu_{\mathcal{D}}} \).

We have the following bound:
\[
\mathbb{E}\left[ f(\theta^T) \right] - f(\theta^{*}) \leq \frac{a}{T},
\]
where 
\[
a = \max \left( \frac{3\alpha \mu_{\mathcal{D}}}{R} \left(f(\theta^1) - f(\theta^{*})\right), \frac{L \alpha^2}{2 \left( \frac{\alpha \mu_{\mathcal{D}}}{R} - 1 \right)} \right).
\]

In particular, if \( \mu_{\mathcal{D}} \) is proportional to \( \frac{1}{\sqrt{d}} \), then by taking \( \alpha = \frac{2R}{\mu_{\mathcal{D}}} \), we obtain a complexity bound of the form \( O\left( \frac{d}{T} \right) \).

\end{theorem}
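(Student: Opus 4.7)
The plan is to derive a one-step recursion on $r_t := f(\theta^t) - f(\theta^*)$ and then establish the bound $\mathbb{E}[r_t] \le a/t$ by induction on $t$.

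First, I would apply the smoothness inequality \eqref{ineq-smooth} to the two trial points $\theta^t \pm \alpha_t s_t$: since the STP update selects the minimizer of $f$ over $\{\theta^t, \theta^t \pm \alpha_t s_t\}$, combining the two smoothness bounds with the option of staying at $\theta^t$ gives
\[
f(\theta^{t+1}) \le f(\theta^t) - \alpha_t \lvert \langle \nabla f(\theta^t), s_t \rangle \rvert + \frac{L\alpha_t^2}{2} \lVert s_t \rVert_2^2.
\]
Taking conditional expectation in $s_t$ and invoking the two parts of \Cref{Ass5} together with $\gamma_{\mathcal{D}} \le 1$ coming from \Cref{Ass6} yields $\mathbb{E}[f(\theta^{t+1}) \mid \theta^t] \le f(\theta^t) - \alpha_t \mu_{\mathcal{D}} \lVert \nabla f(\theta^t) \rVert_{\mathcal{D}} + L\alpha_t^2/2$. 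Because STP is monotone in $f$, each $\theta^t$ lies in $L(\theta^1)$, so the sublevel-set estimate \eqref{eq-cvx} gives $\lVert \nabla f(\theta^t) \rVert_{\mathcal{D}} \ge r_t/R$. Substituting this lower bound, subtracting $f(\theta^*)$, using $\alpha_t = \alpha/t$ and setting $c := \alpha \mu_{\mathcal{D}}/R > 1$, a full expectation produces
\[
\mathbb{E}[r_{t+1}] \le \left(1 - \frac{c}{t}\right) \mathbb{E}[r_t] + \frac{L\alpha^2}{2 t^2}.
\]

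I would then prove $\mathbb{E}[r_t] \le a/t$ by induction on $t$. For indices $t \le 3c$, STP monotonicity gives $\mathbb{E}[r_t] \le r_1 := f(\theta^1)-f(\theta^*)$, and since $a \ge 3c\, r_1$, we have $a/t \ge a/(3c) \ge r_1$, so the bound is automatic. For $t > 3c$ (in particular $t > c$, hence $1 - c/t > 0$), plugging $\mathbb{E}[r_t] \le a/t$ into the recurrence and rearranging shows $\mathbb{E}[r_{t+1}] \le a/(t+1)$ is implied by
\[
a \cdot \frac{(c-1)t + c}{t+1} \ge \frac{L\alpha^2}{2}.
\]
A short calculus check shows that $g(t) := ((c-1)t + c)/(t+1)$ is decreasing with limit $c-1$, so the uniform sufficient condition is $a(c-1) \ge L\alpha^2/2$, which is built into $a$ via its second term. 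For the dimension-dependent assertion, when $\mu_{\mathcal{D}} \sim 1/\sqrt{d}$ and $\alpha = 2R/\mu_{\mathcal{D}}$, we get $c = 2$ and $L\alpha^2/(2(c-1)) = 2LR^2/\mu_{\mathcal{D}}^2 = O(d)$, so $a = O(d)$ and the rate is $O(d/T)$.

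The main obstacle I anticipate is the early regime $t < c$, where $1 - c/t < 0$ and the inductive hypothesis cannot be chained through the recurrence in an order-preserving way. The clean remedy, which likely motivates the specific factor of $3$ in $3c\, r_1$, is to exploit STP monotonicity ($\mathbb{E}[r_t] \le r_1$) to cover every $t \le 3c$ trivially, giving the induction enough slack to take over once $t$ is safely larger than $c$ and the coefficient $1 - c/t$ is comfortably positive.
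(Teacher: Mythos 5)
Your proposal is correct and follows essentially the same route as the paper's proof: the same descent recursion $\mathbb{E}[r_{t+1}]\le(1-\tfrac{c}{t})\mathbb{E}[r_t]+\tfrac{L\alpha^2}{2t^2}$ obtained from smoothness, \Cref{Ass5}, and the sublevel-set bound~\eqref{eq-cvx}, the same trivial handling of the early indices via monotonicity of $f(\theta^t)$ (the paper uses the threshold $[\tfrac{\alpha\mu_{\mathcal{D}}}{R}]+1\le 3c$, you use $3c$ directly), and the same induction whose step reduces to $a(c-1)\ge L\alpha^2/2$, which is exactly what the second term in the definition of $a$ guarantees. Your reformulation of the induction inequality via the decreasing function $g(t)=(c-1)+\tfrac{1}{t+1}$ is just a cleaner packaging of the paper's chain of implications, not a different argument.
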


Note that for the normal distribution \ref{NorDis} with zero mean and identity covariance matrix , as well as for the uniform distribution over the unit sphere \ref{unit-sphere}, $\mu_{\mathcal{D}}$ is proportional to $\frac{1}{\sqrt{d}}$.

\begin{Remark}\label{LimitR}
Assume that $\mu_{\mathcal{D}}$ is proportional to $\frac{1}{\sqrt{d}}$, and 
let $c\geq 2$ be a constant. If we choose $\alpha$ such that $\frac{2R}{\mu_{\mathcal{D}}}\le \alpha \le \frac{cR}{\mu_{\mathcal{D}}}$, we get $\frac{ L \alpha^2  }{ 2(\frac{\alpha \mu_{\mathcal{D}}  }{R} -1)} = O(c d)$. Thus, we obtain the convergence rate  $O(\frac{c d}{T})$ in \Cref{Thm4}.
\end{Remark}

\begin{theorem}\label{Thm5}
Assume that \Cref{Ass1,Ass2,Ass3,Ass5,Ass6} hold true.  Let \( \{\theta^t\}_{t \geq 1} \) be a sequence generated by \Cref{Alg1}, where the step size sequence is given by \( \alpha_t = O\left( \frac{1}{t^{1 - \beta}} \right) \) for some \( \beta \in \left( 0, \frac{1}{2} \right) \).

We then have the following:
\[
\forall \epsilon \in (2\beta, 1), \; f(\theta^T) - f(\theta^{*}) = o\left( \frac{1}{T^{1 - \epsilon}} \right) \quad \text{a.s.}
\]

\end{theorem}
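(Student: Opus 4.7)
The plan is to convert STP's one-step descent inequality into a Robbins-Siegmund-type recursion on a rescaled sequence and then use a two-exponent trick to upgrade boundedness to a little-$o$ rate. Concretely, pick an auxiliary exponent $\epsilon' \in (2\beta,\epsilon)$, study $Y_t := t^{1-\epsilon'}(f(\theta^t)-f(\theta^*))$, show that $Y_t$ is almost surely convergent via Robbins-Siegmund, and then use $\epsilon' < \epsilon$ to pass from a.s.\ boundedness of $Y_t$ to the desired decay $f(\theta^T) - f(\theta^*) = o(T^{-(1-\epsilon)})$.

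First, I would redo the one-step analysis already underlying \Cref{Lemma2}. Combining the argmin step of \Cref{Alg1} with the smoothness bound~\eqref{ineq-smooth} and \Cref{Ass6} gives
\[
f(\theta^{t+1}) \le f(\theta^t) - \alpha_t |\langle \nabla f(\theta^t), s_t \rangle| + \tfrac{L}{2}\alpha_t^2,
\]
and conditioning on $\mathcal{F}_t := \sigma(s_1,\ldots,s_{t-1})$ and using the second part of \Cref{Ass5} yields
\[
\mathbb{E}[f(\theta^{t+1}) \mid \mathcal{F}_t] \le f(\theta^t) - \alpha_t \mu_{\mathcal{D}} \lVert \nabla f(\theta^t) \rVert_{\mathcal{D}} + \tfrac{L}{2}\alpha_t^2.
\]
Since \Cref{Alg1} keeps every iterate in the compact sublevel set $L(\theta^1)$, bound~\eqref{eq-cvx} applies; writing $X_t := f(\theta^t) - f(\theta^*) \ge 0$, this produces
\[
\mathbb{E}[X_{t+1} \mid \mathcal{F}_t] \le \Bigl(1 - \tfrac{\alpha_t \mu_{\mathcal{D}}}{R}\Bigr) X_t + \tfrac{L}{2}\alpha_t^2,
\]
valid for every $t$ large enough that $\alpha_t \mu_{\mathcal{D}}/R \le 1$, which is eventually the case since $\alpha_t \to 0$.

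Next, fix $\epsilon \in (2\beta,1)$, pick any $\epsilon' \in (2\beta,\epsilon)$, and set $Y_t := t^{1-\epsilon'} X_t \ge 0$. Multiplying the previous recursion by $(t+1)^{1-\epsilon'}$ and using the comparison
\[
(t+1)^{1-\epsilon'} \Bigl(1 - \tfrac{\alpha_t \mu_{\mathcal{D}}}{R}\Bigr) \le t^{1-\epsilon'} \qquad\text{for all } t\ge t_0
\]
(which follows from $(1+1/t)^{1-\epsilon'} - 1 = \Theta(1/t)$ being asymptotically dominated by $\alpha_t \mu_{\mathcal{D}}/R = \Theta(t^{-(1-\beta)})$, since $1-\beta < 1$) yields
\[
\mathbb{E}[Y_{t+1} \mid \mathcal{F}_t] \le Y_t + w_t, \qquad w_t := \tfrac{L}{2}\alpha_t^2 (t+1)^{1-\epsilon'} = O\bigl(t^{-(1+\epsilon'-2\beta)}\bigr).
\]
Because $\epsilon' > 2\beta$, the exponent $1 + \epsilon' - 2\beta$ exceeds $1$, so $\sum_t w_t < \infty$; by the Robbins-Siegmund theorem (with zero multiplicative and decrement terms) $Y_t$ converges almost surely to a finite nonnegative limit $Y_\infty$.

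Finally, since $\epsilon > \epsilon'$, writing $T^{1-\epsilon} X_T = T^{\epsilon'-\epsilon}\, Y_T$ shows that the right-hand side tends to $0 \cdot Y_\infty = 0$ almost surely, which is exactly $f(\theta^T) - f(\theta^*) = o(1/T^{1-\epsilon})$ a.s. The delicate step is the scale comparison in the third paragraph: the auxiliary $\epsilon'$ must sit strictly above $2\beta$ so that both $\sum_t w_t<\infty$ \emph{and} the inequality $(t+1)^{1-\epsilon'}(1-\alpha_t \mu_{\mathcal{D}}/R)\le t^{1-\epsilon'}$ hold asymptotically, and strictly below $\epsilon$ so that the final scaling upgrades a.s.\ convergence of $Y_t$ into a genuine little-$o$ bound on $X_T$; both constraints are simultaneously feasible precisely because $\epsilon > 2\beta$, which is exactly the hypothesis on $\epsilon$ in the statement.
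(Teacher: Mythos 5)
Your proof is correct and follows essentially the same route as the paper's: the reduction to the recursion $\mathbb{E}[X_{t+1}\mid\mathcal{F}_t]\le\bigl(1-\tfrac{\alpha_t\mu_{\mathcal{D}}}{R}\bigr)X_t+\tfrac{L}{2}\alpha_t^2$ via \Cref{Lemma1} and \eqref{eq-cvx} is exactly the paper's first step (borrowed from the proof of \Cref{Thm4}), and your Robbins--Siegmund argument on $Y_t=t^{1-\epsilon'}X_t$ with the auxiliary exponent $\epsilon'\in(2\beta,\epsilon)$ is precisely a proof of \Cref{Lemma4} (Liu--Yuan, Lemma 1), which the paper simply invokes as a black box. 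The one caveat is that your scale comparison uses the lower bound $\alpha_t=\Theta(t^{-(1-\beta)})$ while the hypothesis only states $\alpha_t=O(t^{-(1-\beta)})$; since some such lower bound is genuinely necessary for the conclusion (e.g.\ $\alpha_t\equiv 0$ satisfies the stated hypothesis but gives no convergence at all), this is an imprecision inherited from the statement and from \Cref{Lemma4} rather than a gap you introduced.
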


\section{Strongly Convex almost sure convergence rate for STP Algorithm}\label{sec5}
In this section we will assume that \Cref{Ass1,Ass4,Ass5,Ass6,Ass7}  hold true. 
The main results of this section are stated in \Cref{Thm6,Thm7}, which follow from \Cref{Lemma6}.  When step sizes are obtained
by approximating the directional derivatives of the function with respect to the random search directions, 
we show in~\Cref{Thm7} that $f(\theta^T)$ converges in expectation to $\inf_{\theta \in \mathbb{R}^d} f(\theta)$ at a rate of $O((1-\frac{\mu}{2\pi dL})^T)$, 
and in \Cref{Thm6}, we establish this convergence almost surely 
at a rate arbitrarily close to $o((1-\frac{\mu}{2\pi dL})^T)$, 
where $\mu$ and $L$
are  the strong convexity and smoothness parameters of the function, and $d$ is the dimension of the space. 
We recall that a strongly convex function has a unique minimizer, which we denote by $\theta^{*}$.

The following~\Cref{Lemma5}, controls the decrease per iteration of the value function. It is used to control the total decrease of the value function after $T$ iterations given in~\Cref{Lemma6}. 

\begin{lemma}\label{Lemma5}
Assume that \Cref{Ass1,Ass6} hold true.   Let $h \in (1, \infty)$  and let $\{\theta^t\}_{t \ge 1}$ be a sequence generated by \Cref{Alg1}, where the step size sequence used is $\{ \frac{|f(\theta^t+h^{-t} s_t)-f(\theta^t) |}{L h^{-t}}\}_{t \ge 1}.$ 
Then we have: $$\forall t \ge 1,\;   f(\theta^{t+1}) \le  f(\theta^t)-\frac{  |\langle \nabla f(\theta^t),s_t\rangle|^2 }{2L}+\frac{L}{8}h^{-2t}\; \text{a.s.}$$
\end{lemma}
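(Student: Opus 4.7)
The plan is to exploit the fact that the STP update chooses $\theta^{t+1}$ as the argmin over $\{\theta^t,\ \theta^t+\alpha_t s_t,\ \theta^t-\alpha_t s_t\}$, combine this with the $L$-smoothness inequality \eqref{ineq-smooth} applied both at scale $\alpha_t$ (to bound the decrease) and at scale $h^{-t}$ (to analyze the step size), and then show that the leading-order terms cancel cleanly, leaving an $O(h^{-2t})$ residual.

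First I would write, for either sign, the smoothness consequence $f(\theta^t\pm\alpha_t s_t)\le f(\theta^t)\pm\alpha_t\langle\nabla f(\theta^t),s_t\rangle+\tfrac{L}{2}\alpha_t^2\|s_t\|_2^2$, invoke \Cref{Ass6} so that $\|s_t\|_2\le 1$ almost surely, take the minimum over the two signs, and conclude the a.s.\ bound
\[
f(\theta^{t+1})-f(\theta^t)\le -\alpha_t\,|\langle\nabla f(\theta^t),s_t\rangle|+\tfrac{L}{2}\alpha_t^2.
\]
Abbreviate $g_t:=\langle\nabla f(\theta^t),s_t\rangle$.

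Next I would analyze $\alpha_t=\frac{|f(\theta^t+h^{-t}s_t)-f(\theta^t)|}{Lh^{-t}}$. Applying \eqref{ineq-smooth} at the perturbation scale $h^{-t}$ and using $\|s_t\|_2\le 1$ a.s.\ gives
\[
\bigl|f(\theta^t+h^{-t}s_t)-f(\theta^t)-h^{-t}g_t\bigr|\le \tfrac{L}{2}h^{-2t}\quad\text{a.s.,}
\]
which, after dividing by $Lh^{-t}$, yields the crucial identity $\alpha_t=\tfrac{|g_t|}{L}+\tilde r_t$ with $|\tilde r_t|\le \tfrac{1}{2}h^{-t}$ a.s.\ (obtained by the reverse triangle inequality on the numerator).

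Finally I would substitute this decomposition of $\alpha_t$ into the one-step descent bound:
\[
-\alpha_t|g_t|+\tfrac{L}{2}\alpha_t^2=-|g_t|\!\left(\tfrac{|g_t|}{L}+\tilde r_t\right)+\tfrac{L}{2}\!\left(\tfrac{|g_t|}{L}+\tilde r_t\right)^{2}.
\]
The linear-in-$\tilde r_t$ terms cancel, leaving $-\tfrac{|g_t|^{2}}{2L}+\tfrac{L}{2}\tilde r_t^{2}\le -\tfrac{|g_t|^{2}}{2L}+\tfrac{L}{8}h^{-2t}$, which is the claimed inequality. The main obstacle here is essentially bookkeeping: one has to be careful that the absolute value inside $\alpha_t$ interacts correctly with the sign of $g_t$, but the reverse triangle inequality gives the bound $\bigl|\alpha_t L-|g_t|\bigr|\le \tfrac{L}{2}h^{-t}$ regardless of sign, so the cancellation goes through and the residual is purely $\tfrac{L}{2}\tilde r_t^2\le \tfrac{L}{8}h^{-2t}$.
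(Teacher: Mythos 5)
Your proof is correct and follows essentially the same route as the paper's: both start from $f(\theta^{t+1})\le f(\theta^t)-\alpha_t|\langle\nabla f(\theta^t),s_t\rangle|+\tfrac{L}{2}\alpha_t^2$, and both exploit the identity $-\alpha_t|g_t|+\tfrac{L}{2}\alpha_t^2=-\tfrac{|g_t|^2}{2L}+\tfrac{L}{2}\bigl(\alpha_t-\tfrac{|g_t|}{L}\bigr)^2$ together with the reverse triangle inequality and \eqref{ineq-smooth} to bound $\bigl|\alpha_t-\tfrac{|g_t|}{L}\bigr|\le\tfrac{1}{2}h^{-t}$. Your explicit decomposition $\alpha_t=\tfrac{|g_t|}{L}+\tilde r_t$ is just a cleaner bookkeeping of the same cancellation the paper carries out inline.
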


\begin{lemma}\label{Lemma6}
 Assume that \Cref{Ass1,Ass4,Ass5,Ass6,Ass7}  hold true. Let $h\in (1 , \infty)$ and let $\{\theta^t\}_{t \ge 1}$ be a sequence generated by \Cref{Alg1}, where the step size sequence used is $\{\frac{|f(\theta^t+h^{-t} s_t)-f(\theta^t) |}{L h^{-t}}\}_{t \ge 1}.$ Then we have:
$$\forall T \ge 2,\;
\mathbb{E}[f(\theta^{T})-f(\theta^{*})  ]  \leq\left(1-\frac{\mu_{\mathcal{D}}^2 \mu}{L}\right)^{T-1} [f(\theta^{1})-f(\theta^{*})  ]+ \frac{L}{8}\sum_{i=1}^{T-1}\left(1-\frac{\mu_{\mathcal{D}}^2 \mu}{L}\right)^{T-1-i} h^{-2i}.$$
     
\end{lemma}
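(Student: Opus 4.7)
The plan is to use \Cref{Lemma5} as a per-iteration almost-sure descent inequality and then turn it into a one-step expected contraction by taking a conditional expectation with respect to the natural filtration $\mathcal{F}_t := \sigma(s_1,\dots,s_{t-1})$. Subtracting $f(\theta^{*})$ from both sides of the conclusion of \Cref{Lemma5} gives
\[
f(\theta^{t+1}) - f(\theta^{*}) \le \bigl(f(\theta^t)-f(\theta^{*})\bigr) - \frac{|\langle \nabla f(\theta^t), s_t\rangle|^2}{2L} + \frac{L}{8}h^{-2t} \quad \text{a.s.}
\]
Since $\theta^t$ is $\mathcal{F}_t$-measurable and $s_t$ is independent of $\mathcal{F}_t$, taking $\mathbb{E}[\cdot \mid \mathcal{F}_t]$ only affects the middle term through the distribution $\mathcal{D}$.

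The central step is to lower-bound $\mathbb{E}_{s_t}[|\langle \nabla f(\theta^t), s_t\rangle|^2]$. I will apply Jensen's inequality to the convex map $x \mapsto x^2$ to obtain
\[
\mathbb{E}_{s_t}\bigl[|\langle \nabla f(\theta^t), s_t\rangle|^2\bigr] \ge \bigl(\mathbb{E}_{s_t}|\langle \nabla f(\theta^t), s_t\rangle|\bigr)^2,
\]
and then invoke the modified version of the second part of \Cref{Ass5} described in \Cref{Rem1} (the version in force throughout \Cref{sec5}), which states $\mathbb{E}_{s\sim\mathcal{D}}|\langle v,s\rangle| \ge \mu_{\mathcal{D}}\|v\|_2$. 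This yields $\mathbb{E}_{s_t}[|\langle \nabla f(\theta^t), s_t\rangle|^2] \ge \mu_{\mathcal{D}}^2 \|\nabla f(\theta^t)\|_2^2$. Finally, the Polyak-{\L}ojasiewicz inequality consequence of \Cref{Ass4} recalled after its statement gives $\|\nabla f(\theta^t)\|_2^2 \ge 2\mu(f(\theta^t)-f(\theta^{*}))$. Combining these three bounds produces
\[
\mathbb{E}\bigl[f(\theta^{t+1})-f(\theta^{*}) \mid \mathcal{F}_t\bigr] \le \Bigl(1-\frac{\mu_{\mathcal{D}}^2 \mu}{L}\Bigr)\bigl(f(\theta^t)-f(\theta^{*})\bigr) + \frac{L}{8}h^{-2t}.
\]

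Taking total expectations and setting $e_t := \mathbb{E}[f(\theta^t)-f(\theta^{*})]$ and $\rho := 1 - \mu_{\mathcal{D}}^2\mu/L$ yields the linear recurrence $e_{t+1} \le \rho\, e_t + (L/8)h^{-2t}$. Unrolling this recurrence from $t=1$ to $t=T-1$ gives directly
\[
e_T \le \rho^{T-1} e_1 + \frac{L}{8}\sum_{i=1}^{T-1} \rho^{T-1-i} h^{-2i},
\]
which is the claimed bound. Note that $\rho \in [0,1)$ is guaranteed because \Cref{Ass7} ensures $\mu_{\mathcal{D}}<1$, and $\mu \le L$ always holds for an $L$-smooth, $\mu$-strongly convex function, so the geometric decay factor is meaningful.

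The only non-trivial obstacle is the lower bound on the quadratic moment $\mathbb{E}|\langle \nabla f(\theta^t),s_t\rangle|^2$: \Cref{Ass5} only provides a control on the first absolute moment, so using Jensen's inequality to pass to the second moment is the key maneuver, and it is what introduces the factor $\mu_{\mathcal{D}}^2$ (rather than $\mu_{\mathcal{D}}$) in the contraction rate. Everything else is a standard telescoping of a linear stochastic recurrence, so once the one-step contraction is in place the final bound follows with no additional difficulty.
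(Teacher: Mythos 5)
Your proposal is correct and follows essentially the same route as the paper's own proof: both apply \Cref{Lemma5}, pass to the second moment via Jensen's inequality combined with the modified \Cref{Ass5} of \Cref{Rem1} to obtain the factor $\mu_{\mathcal{D}}^2\|\nabla f(\theta^t)\|_2^2$, invoke the Polyak--{\L}ojasiewicz consequence of \Cref{Ass4}, and unroll the resulting linear recurrence. The only cosmetic difference is that you condition on the filtration $\mathcal{F}_t$ before taking total expectations, whereas the paper phrases the same computation through the tower property.
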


\begin{theorem}\label{Thm7}
 Assume that \Cref{Ass1,Ass4,Ass5,Ass6,Ass7} hold true. Let  $h \in \left(\frac{1}{\sqrt{1-\frac{\mu_{\mathcal{D}}^2 \mu}{L}}}, \infty \right)$ and
   let  $\{\theta^t\}_{t \ge 1}$ be a sequence generated by \Cref{Alg1}, where the step size sequence used is $\{\frac{|f(\theta^t+h^{-t} s_t)-f(\theta^t) |}{L h^{-t}}\}_{t \ge 1}$. Then we have:
\begin{equation}\label{eq-strong-exp}
\forall T \ge 2, \;
\mathbb{E}[f(\theta^{T})-f(\theta^{*})  ]  \leq \left(1-\frac{\mu_{\mathcal{D}}^2 \mu}{L}\right)^{T-1} \bigg[f(\theta^{1})-f(\theta^{*})  
+\frac{L}{8}  
\frac{ 1}{h^2 \left(1-\frac{\mu_{\mathcal{D}}^2 \mu}{L} \right)- 1 } \bigg].
\end{equation}

In particular, if $\mu_{\mathcal{D}}$ is proportional to $\frac{1}{\sqrt{d}}$, i.e., $\mu_{\mathcal{D}} = \frac{K}{\sqrt{d}}$, for some positive constant $K$, then by taking $h=\frac{2}{\sqrt{1-\frac{\mu_{\mathcal{D}}^2 \mu}{L}}}$, we obtain a rate of $O\left(\left(1 - \frac{\mu K^2}{dL}\right)^T\right)$. 
\end{theorem}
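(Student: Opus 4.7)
The plan is to obtain the theorem as a direct consequence of \Cref{Lemma6} by summing the geometric series that appears in its second term. Writing $q:=1-\frac{\mu_{\mathcal{D}}^2 \mu}{L}\in (0,1)$, \Cref{Lemma6} gives
\[
\mathbb{E}[f(\theta^{T})-f(\theta^{*})] \le q^{T-1}[f(\theta^{1})-f(\theta^{*})] + \frac{L}{8}\sum_{i=1}^{T-1} q^{T-1-i} h^{-2i}.
\]
The first step is to factor $q^{T-1}$ out of the sum and rewrite it as $q^{T-1}\sum_{i=1}^{T-1}(qh^{2})^{-i}$. The hypothesis $h>1/\sqrt{q}$ is precisely what makes $qh^{2}>1$, so the common ratio $(qh^{2})^{-1}$ lies in $(0,1)$ and the geometric series converges.

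Next, I would bound the finite geometric sum by the infinite one:
\[
\sum_{i=1}^{T-1}(qh^{2})^{-i} \le \sum_{i=1}^{\infty}(qh^{2})^{-i} = \frac{(qh^{2})^{-1}}{1-(qh^{2})^{-1}} = \frac{1}{qh^{2}-1}.
\]
Substituting back yields
\[
\frac{L}{8}\sum_{i=1}^{T-1} q^{T-1-i} h^{-2i} \le \frac{L}{8}\cdot q^{T-1}\cdot\frac{1}{h^{2}\bigl(1-\tfrac{\mu_{\mathcal{D}}^2\mu}{L}\bigr)-1},
\]
and combining with the first term of \Cref{Lemma6} gives exactly \eqref{eq-strong-exp}.

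For the final sentence, I plug $\mu_{\mathcal{D}}=K/\sqrt{d}$ into $q$, obtaining $q=1-\frac{K^{2}\mu}{dL}$, and then take $h=2/\sqrt{q}$, which is admissible since $2/\sqrt{q}>1/\sqrt{q}$. This choice gives $h^{2}q=4$, hence $h^{2}q-1=3$, so the bracketed constant in \eqref{eq-strong-exp} equals $f(\theta^{1})-f(\theta^{*})+L/24$, a quantity independent of $T$ and $d$. Therefore the whole expression is $O\!\left(\bigl(1-\tfrac{K^{2}\mu}{dL}\bigr)^{T}\right)$, as claimed.

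I do not expect any serious obstacle: the only points requiring a bit of care are checking that $qh^{2}>1$ (which is exactly the stated condition on $h$) and verifying that the algebraic simplification $qh^{2}-1=h^{2}(1-\mu_{\mathcal{D}}^{2}\mu/L)-1$ matches the denominator written in the theorem. The rest is a short calculation on a convergent geometric series.
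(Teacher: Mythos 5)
Your proposal is correct and follows essentially the same route as the paper: both apply \Cref{Lemma6}, factor $q^{T-1}$ out of the sum, bound the resulting finite geometric series in ratio $(qh^2)^{-1}<1$ by its infinite counterpart $\frac{1}{qh^2-1}$ (this is exactly where the hypothesis $h>1/\sqrt{q}$ is used), and then specialize $h=2/\sqrt{q}$ and $\mu_{\mathcal{D}}=K/\sqrt{d}$ for the rate. Your explicit computation $h^2q-1=3$ in the final step is slightly more detailed than the paper's, but there is no substantive difference.
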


\begin{theorem}\label{Thm6}
     Assume that \Cref{Ass1,Ass4,Ass5,Ass6,Ass7} hold true. 
    Let  $\{\theta^t\}_{t \ge 1}$ be a sequence generated by \Cref{Alg1}, where the step size sequence used is $\{\frac{|f(\theta^t+h^{-t} s_t)-f(\theta^t) |}{L h^{-t}}\}_{t \ge 1}$, 
    with $h \in \left(\frac{1}{\sqrt{1-\frac{\mu_{\mathcal{D}}^2 \mu}{L}}}, \infty \right)$, we have: $\forall s \in (0,1),\;  f(\theta^T)-f(\theta^{*}) =o\left((1-  s \frac{\mu_{\mathcal{D}}^2 \mu }{L})^{T}\right)\; \text{ a.s.} $
     
In particular, if $\mu_{\mathcal{D}}$ is proportional to $\frac{1}{\sqrt{d}}$, i.e., $\mu_{\mathcal{D}} = \frac{K}{\sqrt{d}}$, for some positive constant $K$, then for all $s \in (0,1)$, 
we obtain a convergence rate of $o((1- s\frac{\mu K^2}{dL})^T)$. 
\end{theorem}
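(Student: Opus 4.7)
The plan is to deduce the almost-sure rate from the in-expectation rate of \Cref{Thm7} by a standard Markov--Borel--Cantelli argument. Set
\[
\rho \;:=\; 1 - \tfrac{\mu_{\mathcal{D}}^2 \mu}{L} \in (0,1), \qquad \rho_s \;:=\; 1 - s\,\tfrac{\mu_{\mathcal{D}}^2 \mu}{L} \in (\rho,1),
\]
for an arbitrary but fixed $s \in (0,1)$. \Cref{Thm7} yields a constant $C = C(h,\theta^1,L,\mu,\mu_{\mathcal{D}}) < \infty$ such that
\[
\mathbb{E}[f(\theta^T) - f(\theta^{*})] \;\le\; C\,\rho^{T-1} \qquad \text{for all } T \ge 2.
\]

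Given any $\epsilon > 0$, I would apply Markov's inequality to the nonnegative random variable $f(\theta^T) - f(\theta^{*})$ to get
\[
\mathbb{P}\!\left( f(\theta^T) - f(\theta^{*}) \ge \epsilon\,\rho_s^{T} \right) \;\le\; \frac{\mathbb{E}[f(\theta^T) - f(\theta^{*})]}{\epsilon\,\rho_s^{T}} \;\le\; \frac{C}{\epsilon\,\rho}\left(\frac{\rho}{\rho_s}\right)^{T}.
\]
Since $\rho/\rho_s < 1$, the geometric series $\sum_{T\ge 2} (\rho/\rho_s)^{T}$ converges, so $\sum_{T} \mathbb{P}(f(\theta^T) - f(\theta^{*}) \ge \epsilon \rho_s^T) < \infty$. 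The Borel--Cantelli lemma then gives
\[
\mathbb{P}\!\left( f(\theta^T) - f(\theta^{*}) \ge \epsilon \rho_s^{T} \text{ for infinitely many } T \right) = 0,
\]
i.e.\ almost surely $\limsup_{T\to\infty} (f(\theta^T) - f(\theta^{*}))/\rho_s^{T} \le \epsilon$.

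Intersecting over a countable sequence $\epsilon = 1/n$, $n \ge 1$, preserves full probability, so almost surely $\limsup_{T\to\infty}(f(\theta^T)-f(\theta^{*}))/\rho_s^{T} = 0$; combined with $f(\theta^T)-f(\theta^{*}) \ge 0$ this is exactly $f(\theta^T)-f(\theta^{*}) = o(\rho_s^{T})$ a.s. The ``in particular'' clause then follows by plugging $\mu_{\mathcal{D}} = K/\sqrt{d}$ into the definition of $\rho_s$. I do not anticipate a significant obstacle: the only point requiring mild care is to produce a single almost-sure event valid for every $s \in (0,1)$, which is handled by taking the countable intersection over a dense sequence $s_n \uparrow 1$ (so $\rho_{s_n} \uparrow 1$) and observing that the $o(\rho_{s_n}^T)$ statement implies $o(\rho_s^T)$ for every $s < s_n$.
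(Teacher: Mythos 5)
Your proposal is correct, and it reaches the conclusion by a genuinely different (though equally standard) probabilistic mechanism than the paper. Both arguments start from the same place --- the in-expectation bound $\mathbb{E}[f(\theta^T)-f(\theta^{*})]\le C\rho^{T-1}$ of \Cref{Thm7} and the key fact that $\rho/\rho_s<1$ --- but they convert it to an almost-sure statement differently. The paper multiplies the bound by $a^{-T}$ with $a=\rho_s$, sums over $T$, exchanges expectation and sum (Tonelli, nonnegative terms) to get $\sum_T a^{-T}\bigl(f(\theta^T)-f(\theta^{*})\bigr)<\infty$ a.s., and reads off $o(a^T)$ from the fact that the general term of a convergent series vanishes; this is a one-shot argument needing no quantifier over $\epsilon$. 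You instead apply Markov's inequality at threshold $\epsilon\rho_s^T$ and invoke Borel--Cantelli, then intersect over $\epsilon=1/n$; this is more modular (it would survive with any summable tail bound on $\mathbb{P}(X_T\ge\epsilon\rho_s^T)$, not only one coming from a first-moment bound) at the cost of the extra countable intersection. Your closing remark about obtaining a single full-measure event valid for every $s\in(0,1)$ by intersecting over $s_n\uparrow 1$ and using that $o(\rho_{s_n}^T)$ implies $o(\rho_s^T)$ for $s<s_n$ is correct and is a point the paper does not address (it fixes $s$ and proves the statement for that $s$ only). Both proofs are valid; the paper's is marginally shorter, yours is marginally more robust.
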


\begin{Remark}
Note that for the uniform distribution over the unit sphere, we have $\mu_{\mathcal{D}} = \frac{1}{\sqrt{2\pi d}}$. The convergence rates for \Cref{Thm7} and \Cref{Thm6} in this case are respectively $O((1 - \frac{\mu }{2\pi d L })^T)$ and $o((1 - s\frac{\mu }{2 \pi d L})^T)$ for any $s \in (0,1)$. 
\end{Remark}


\section{Numerical experiments}
\label{exper}
Let's consider the following optimization problem: 
$$\min_{\theta \in \mathbb{R}^d} f(\theta)=\frac{1}{2}\left( \theta_1\right)^2+\frac{1}{2} \sum_{i=1}^{d-1}\left(\theta_{i+1}-\theta_i\right)^2+\frac{1}{2}\left(\theta_d \right)^2-\theta_1,  \quad \text{ initial vector: } \theta^1=0,$$
where $d=500$. This objective function was used in Section 2.1 of \cite{nesterov2018lectures} to prove the lower complexity bound for gradient methods applied to smooth functions.
By running multiple trajectories for the three algorithms: the STP algorithm, the RGF algorithm \citep{nesterov2017random}, and the GLD algorithm \citep{golovin2019gradientless}, the objective is to simulate the convergence of the last gradient iterate for each trajectory and also illustrate the rate of convergence of the best gradient iterate. 

\textbf{RGF Algorithm: } This algorithm starts with an
initial vector $\theta^1$
and iteratively updates it according to the following rule
$
\theta^{t+1} = \theta^t - h_t \frac{f(\theta^t + \mu_t u_t) - f(\theta^t)}{\mu_t} u_t,
$
where $u_t$ is a normally distributed gaussian vector. In this implementation, we set $\mu_t = 10^{-4}$ and $h_t = \frac{1}{L}$, where $L\leq 4$ represents the smoothness parameter of the objective function.  

\textbf{GLD algorithm:} This algorithm
proceeds as follows: it starts with an initial point \( \theta^1 \), a sampling distribution \( \mathcal{D} \), and a search radius that shrinks from a maximum value \( R \) to a minimum value \( r \). The number of radius levels is determined by 
\( K = \ceil{\log_2 \left(\frac{R}{r}\right)} \). For each iteration \( t \), the algorithm performs ball sampling trials, where it samples search directions \( v^k \) from progressively smaller radii \( r_k = 2^{-k} R \), $0\leq k \leq K$, and then updates the current point by selecting the \( v^k \) that results in the minimum value of the objective function. The update step is given by:
$
\theta^{t+1} = \argmin_{y\in\{\theta^t,\theta^t+v^0,\cdots,\theta^t+v^K\}}  f(y)$.
For this algorithm, we use the standard Gaussian distribution $\mathcal{D}$ and set $r=10^{-5}$ and $R=
10^{-4}$.

For the STP algorithm, we set the step sizes to be \(\alpha_t = \frac{4}{t^{0.51}}\), and the random search directions \(s_t\) are generated uniformly on the unit sphere of \(\mathbb{R}^d\). The chosen step sizes adhere to the form provided in the second result of~\Cref{Thm1}, where \(\epsilon = 0.01\). In our experiment, we run 50 trajectories for each of the three algorithms, all starting from the same initial point \(0\). We simulate \(\log_{10}(\|\nabla f(\theta^T)\|_2)\) as a function of the number of iterations, as well as the elapsed time in seconds. Additionally, to verify the rate assured by~\Cref{Thm1} for the STP algorithm, we simulate $T^{0.49}\min_{t \le T} \|\nabla f(\theta^t)\|_2$ as a function of the number of iterations. 

\Cref{fig:lastiter} and \Cref{fig:vstime} illustrate the logarithmic decay of the gradient norm with respect to both iterations and elapsed time, highlighting its convergence to zero across all trajectories for the three algorithms. Notably, STP and RGF demonstrate competitive performance, with STP being slightly better, in terms of the number of iterations and the time required to achieve a given accuracy, outperforming the GLD method in both metrics. This similarity between the performance of STP and RGF reflects their similar theoretical complexity bounds. It is important to note also that at each iteration, the STP and RGF methods require two function evaluations, while the GLD method requires $\ceil{\log_2 \left(\frac{R}{r}\right)}$ function evaluations.

In \Cref{fig:rateiter}, we observe the convergence of the best gradient iterate to $0$ at a rate of $o(\frac{1}{T^{0.49}})$ across all trajectories for the three algorithms. In particular, this illustrates the rate obtained for the STP algorithm.

\begin{figure}[H]
    \centering
    \includegraphics[width=1\linewidth]{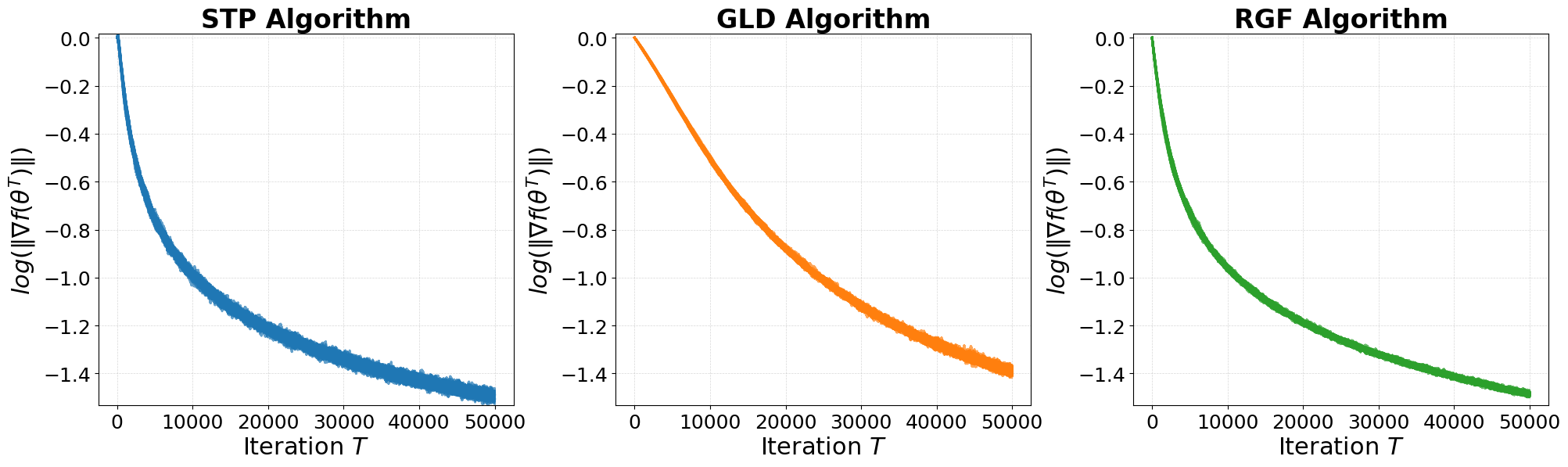}
    \caption{Logarithmic decay of gradient norm vs. Iterations.}
    \label{fig:lastiter}
\end{figure}

\begin{figure}[H]
    \centering
    \includegraphics[width=1\linewidth]{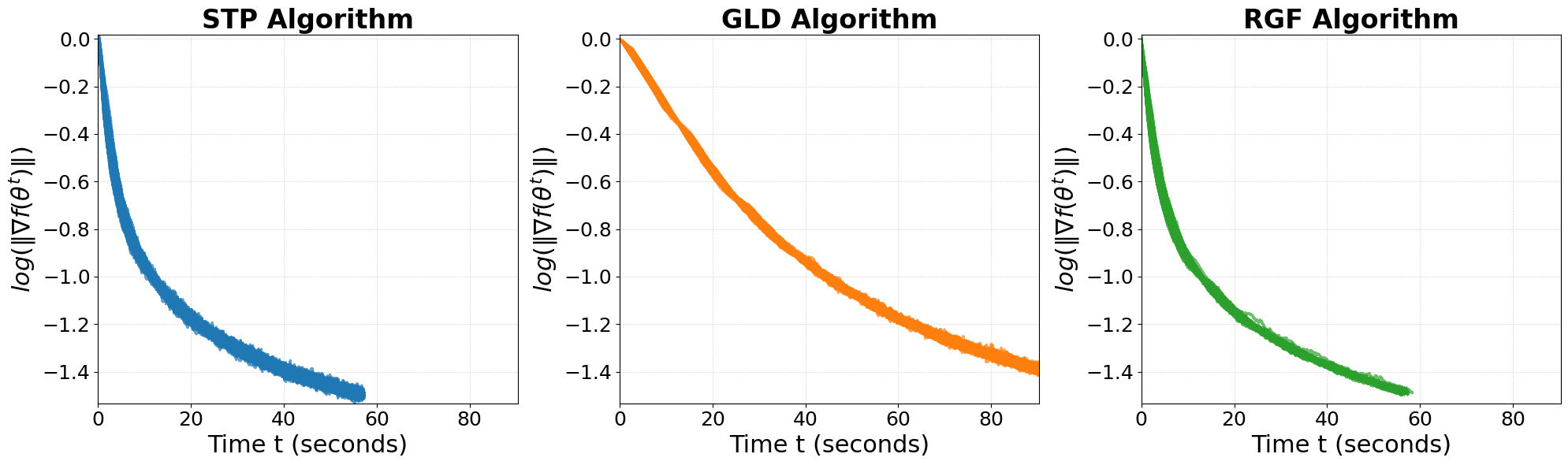}
    \caption{Logarithmic Decay of gradient norm vs. Time.}
    \label{fig:vstime}
\end{figure}

\begin{figure}[H]
    \centering
    \includegraphics[width=1\linewidth]{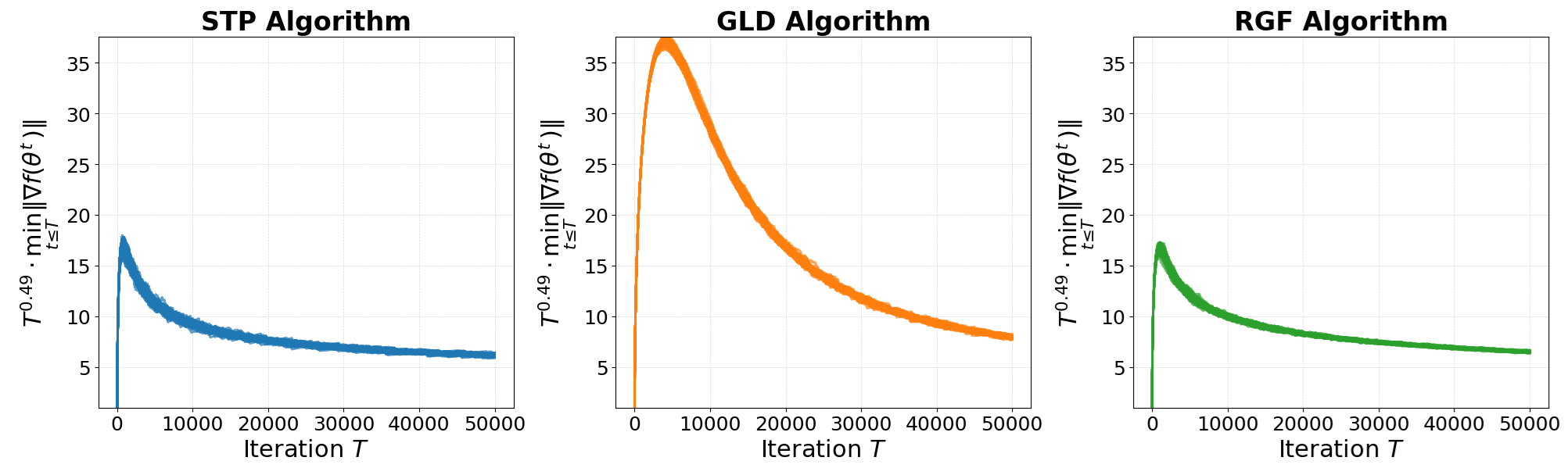}
    \caption{Convergence rate of the best gradient iterate.}
    \label{fig:rateiter}
\end{figure}
\color{black}

\bibliography{iclr2025_conference}
\bibliographystyle{iclr2025_conference}

\appendix
\section{Appendix}
\begin{lemma}\label{Lemma1} \textbf{\cite[Lemma 3.5]{bergou2020stochastic}}
Assume that \Cref{Ass1,Ass5,Ass6} hold true and let $\{\theta^t\}_{t \ge 1}$ be a sequence generated by \Cref{Alg1}. We have:
    $$
    \mathbb{E}[f(\theta^{t+1})  \: \mid \; \theta^t  ]
   \le f(\theta^t) - \mu_{\mathcal{D}} \alpha_t \lVert \nabla f(\theta^t) \rVert_{\mathcal{D}} + \frac{L\alpha_t^2 }{2}.$$

\end{lemma}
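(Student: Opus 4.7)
The plan is to bound $f(\theta^{t+1})$ pointwise in $s_t$ using only the two trial points $\theta^t \pm \alpha_t s_t$, then take conditional expectation and apply the lower bound on $\mathbb{E}|\langle v, s\rangle|$ from \Cref{Ass5}.

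First I would use the defining update $\theta^{t+1} = \argmin_{\theta \in \{\theta^t,\, \theta^t+\alpha_t s_t,\, \theta^t-\alpha_t s_t\}} f(\theta)$, which immediately gives the pointwise bound $f(\theta^{t+1}) \le \min\{f(\theta^t + \alpha_t s_t),\, f(\theta^t - \alpha_t s_t)\}$ (dropping the third candidate only weakens the inequality). Next, applying the descent inequality \eqref{ineq-smooth} that follows from $L$-smoothness at the point $\theta^t$ with increment $\pm \alpha_t s_t$, and using \Cref{Ass6} to bound $\|s_t\|_2^2 \le 1$ almost surely, yields
\[
f(\theta^t \pm \alpha_t s_t) \le f(\theta^t) \pm \alpha_t \langle \nabla f(\theta^t), s_t\rangle + \frac{L \alpha_t^2}{2} \quad \text{a.s.}
\]
Taking the minimum over the two sign choices collapses the linear term into $-\alpha_t |\langle \nabla f(\theta^t), s_t\rangle|$, so that almost surely
\[
f(\theta^{t+1}) \le f(\theta^t) - \alpha_t \, |\langle \nabla f(\theta^t), s_t\rangle| + \frac{L \alpha_t^2}{2}.
\]

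Finally I would condition on $\theta^t$ (so that $\nabla f(\theta^t)$ is treated as deterministic, while $s_t \sim \mathcal{D}$ is independent of $\theta^t$) and take expectations. The deterministic terms $f(\theta^t)$ and $\tfrac{L\alpha_t^2}{2}$ pass through unchanged, while the middle term produces $-\alpha_t\, \mathbb{E}[|\langle \nabla f(\theta^t), s_t\rangle| \mid \theta^t]$. Applying part 2 of \Cref{Ass5} with $v = \nabla f(\theta^t)$ gives $\mathbb{E}[|\langle \nabla f(\theta^t), s_t\rangle| \mid \theta^t] \ge \mu_{\mathcal{D}} \, \|\nabla f(\theta^t)\|_{\mathcal{D}}$, and the claimed inequality follows.

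There is no real obstacle here; the only subtlety is to make sure the almost sure bound from \Cref{Ass6} is properly carried through before the expectation, and that the sign trick $\min\{a+x, a-x\} = a - |x|$ is used with $a = f(\theta^t) + \tfrac{L\alpha_t^2}{2}$ and $x = \alpha_t \langle \nabla f(\theta^t), s_t\rangle$ after each smoothness bound is established.
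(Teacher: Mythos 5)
Your proof is correct and is exactly the standard argument for this lemma: the paper itself does not reprove it (it is quoted from \cite[Lemma 3.5]{bergou2020stochastic}), but your chain of steps --- the two smoothness bounds at $\theta^t \pm \alpha_t s_t$, the $\min\{a+x,a-x\} = a - |x|$ collapse, the a.s.\ bound $\|s_t\|_2 \le 1$ from \Cref{Ass6}, and then the conditional expectation combined with part 2 of \Cref{Ass5} --- is precisely the cited derivation, and the same pattern the authors reuse in their proof of \Cref{Lemma5}. No gaps.
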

\vspace{0.1cm}

\begin{proof}[Proof of \Cref{Lemma2}]

Let $t \ge 1$. By~\Cref{Lemma1}, we have that: 
$$
    \mathbb{E}[f(\theta^{t+1})  \: \mid \; \theta^t  ]
   \le f(\theta^t) - \alpha_t\mu_{\mathcal{D}} \lVert \nabla f(\theta^t) \rVert_{\mathcal{D}} + \frac{L\alpha_t^2  }{2}.
$$ 
By taking the expectation, we get:
$
    \mathbb{E}[f(\theta^{t+1}) ]
   \le \mathbb{E}[f(\theta^t)] - \alpha_t\mu_{\mathcal{D}} \mathbb{E}[\lVert \nabla f(\theta^t) \rVert_{\mathcal{D}}] + \frac{L\alpha_t^2  }{2}.
$ 

It follows that: 
\begin{equation}\label{eq-1}
 \mu_{\mathcal{D}}  \alpha_t \mathbb{E}[\lVert \nabla f(\theta^t) \rVert_\mathcal{D}] \le \mathbb{E}[f(\theta^{t}) ]-\mathbb{E}[f(\theta^{t+1}) ]  + \frac{L\alpha_t^2  }{2}. 
\end{equation}

By construction of the algorithm the sequence $\{f(\theta^{t})\}_{t\ge 1}$ is non-increasing, and since we assume that $f$ is bounded from below, we have that 
$\{\mathbb{E}[f(\theta^{t})]\}_{t \ge  1}$ is non-increasing and bounded from bellow, and thus converges. As a result, we have: 
$\sum_{t=1}^{\infty} \mathbb{E}[f(\theta^{t}) ]-\mathbb{E}[f(\theta^{t+1}) ]<\infty$. 
Knowing that $\sum_{t=1}^{\infty}\alpha_t^2 < \infty$, we conclude from~\eqref{eq-1}, that 
$$ \sum_{t=1}^{\infty} \alpha_t  \mathbb{E}\left[ \lVert \nabla f(\theta^t) \rVert_{\mathcal{D}}\right] <\infty.$$

We deduce also that 
$
\mathbb{E}\left[\sum_{t=1}^{\infty} \alpha_t \lVert \nabla f(\theta^t) \rVert_\mathcal{D} \right]  =\sum_{t=1}^{\infty} \alpha_t \mathbb{E}\left[ \lVert \nabla f(\theta^t) \rVert_{\mathcal{D}}\right] <\infty,$ which implies that:
$$
\sum_{t=1}^{\infty} \alpha_t \lVert \nabla f(\theta^t) \rVert_\mathcal{D} < \infty \text{ a.s.} 
$$
\end{proof}

\begin{lemma}\label{Lem_Reb} 
 Let \( \{X_t\}_{t \ge 1} \) be a sequence of nonnegative real numbers that is non increasing and converges to \( 0 \), and let \( \{\alpha_t\}_{t \ge 1} \) be a sequence of real numbers such that  \( \sum_{t=1}^{\infty} \alpha_t X_t \) converges. Then, we have:
\[
X_T = o\left( \frac{1}{\sum_{t=1}^T \alpha_t} \right).
\]

\end{lemma}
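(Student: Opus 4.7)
The plan is to split the partial sum $\sum_{t=1}^T \alpha_t$ at an index $N$ chosen via the Cauchy criterion, and then bound the prefix and the tail separately, using the monotonicity of $\{X_t\}_{t \ge 1}$ to pull out a factor $X_T$ from the tail.

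First, I would fix an arbitrary $\epsilon > 0$. Since $\sum_{t=1}^\infty \alpha_t X_t$ converges, the Cauchy criterion provides an index $N \ge 1$ such that for every $T > N$,
$$\sum_{t=N+1}^T \alpha_t X_t \,<\, \epsilon/2.$$
In the setting of interest for this paper the $\alpha_t$ are positive step sizes, so this is a sum of nonnegative terms. Because $\{X_t\}$ is non-increasing, we have $X_t \ge X_T$ whenever $t \le T$, and hence
$$X_T \sum_{t=N+1}^T \alpha_t \,\le\, \sum_{t=N+1}^T \alpha_t X_t \,<\, \epsilon/2.$$

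Second, I would handle the fixed prefix $X_T \sum_{t=1}^N \alpha_t$. Because $N$ is now fixed and $X_T \to 0$, there exists $T_0 \ge N$ such that $X_T \sum_{t=1}^N \alpha_t < \epsilon/2$ for all $T \ge T_0$. Combining the two estimates,
$$X_T \sum_{t=1}^T \alpha_t \,=\, X_T \sum_{t=1}^N \alpha_t \,+\, X_T \sum_{t=N+1}^T \alpha_t \,<\, \epsilon \qquad \text{for every } T \ge T_0,$$
which is precisely the claim $X_T = o\bigl(1/\sum_{t=1}^T \alpha_t\bigr)$.

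The argument is a short two-piece splitting and I do not foresee a serious obstacle. The only point requiring mild care is the use of nonnegativity of the $\alpha_t$ to convert the tail bound on $\sum_{t=N+1}^T \alpha_t X_t$ into a bound on $X_T \sum_{t=N+1}^T \alpha_t$ via the monotonicity of $X_t$. Note also that if $\sum_{t=1}^T \alpha_t$ remains bounded the statement is trivial, since $X_T \to 0$; the interesting case, and the one actually invoked in the deduction of \Cref{Thm1}, is $\sum_{t=1}^T \alpha_t \to \infty$, where this splitting argument is essential.
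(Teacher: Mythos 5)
Your argument is correct, but it is not the route the paper takes, and it proves a slightly narrower statement. The paper's proof is an Abel summation: it sets $R_T=\sum_{i=T}^{\infty}\alpha_i X_i$, writes $\alpha_i=(R_i-R_{i+1})/X_i$, and after summation by parts controls $X_T\sum_{i=2}^T R_i\bigl(\tfrac{1}{X_i}-\tfrac{1}{X_{i-1}}\bigr)$ by splitting at an index beyond which $|R_i|\le \epsilon/2$ and telescoping the increments $\tfrac{1}{X_i}-\tfrac{1}{X_{i-1}}\ge 0$. That version never needs a sign condition on the $\alpha_t$, which matters because the lemma as stated allows $\{\alpha_t\}$ to be an arbitrary real sequence. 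Your proof instead splits $\sum_{t=1}^T\alpha_t$ directly and uses the termwise comparison $\alpha_t X_T\le \alpha_t X_t$, which is only valid when $\alpha_t\ge 0$; you flag this restriction explicitly, and it does hold in every application in the paper (the step sizes lie in $(0,\infty)$), so your argument fully supports the way the lemma is used in Theorem~\ref{Thm1}. What your approach buys is brevity and transparency --- a two-line Cauchy-criterion argument with no summation by parts; what the paper's approach buys is the extra generality of signed $\alpha_t$. If you wanted your proof to cover the lemma exactly as stated, you would either need to add the hypothesis $\alpha_t\ge 0$ to the statement or fall back on the Abel-summation device.
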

\begin{proof}
    For all $T \ge 1$, we define $U_T=X_T \sum_{i=1}^T \alpha_i$ and $R_T=\sum_{i=T}^{\infty} \alpha_i X_i.$ We then have:
 
 $$
    U_T=X_T\sum_{i=1}^T (R_{i}-R_{i+1})\frac{1}{X_i} .
  $$
 Let $T \ge 2.$ We have:
 \begin{align*}
      U_T&=X_T \left[ \sum_{i=1}^T R_i\frac{1}{X_i}-\sum_{i=1}^T R_{i+1}\frac{1}{X_i}  \right]\\
      &= X_T \left[ \sum_{i=1}^T R_i\frac{1}{X_i}-\sum_{i=2}^{T+1} R_i\frac{1}{X_{i-1}}   \right]\\
      &= X_T \left[ R_1 \frac{1}{X_1}-\frac{R_{T+1}}{X_{T}} +\sum_{i=2}^T R_i (\frac{1}{X_i}-\frac{1}{X_{i-1}})  \right]\\
      &=R_1 \frac{X_T}{X_1}-R_{T+1}+X_T \sum_{i=2}^T R_i (\frac{1}{X_i}-\frac{1}{X_{i-1}}).
 \end{align*}
 To prove \( \lim_{T \to +\infty} U_T = 0 \), it suffices to show that:
\[
\lim_{T \to +\infty} X_T \sum_{i=2}^T R_i (\frac{1}{X_i}-\frac{1}{X_{i-1}}) = 0.
\]

Let $\epsilon>0$ and $T_0 \ge 2$ such that for all $T \ge T_0$, we have $ R_T  \le \frac{\epsilon}{2}$. Let $T > T_0$, we have:
 \begin{align*}
 \left| X_T \sum_{i=2}^T R_i (\frac{1}{X_i}-\frac{1}{X_{i-1}})    \right| &\le X_T \sum_{i=2}^{T_0} |R_i|(\frac{1}{X_i}-\frac{1}{X_{i-1}}) +X_T\sum_{i=T_0+1}^{T} \frac{\epsilon}{2} (\frac{1}{X_i}-\frac{1}{X_{i-1}})
 \\
 &= X_T \sum_{i=2}^{T_0} |R_i|(\frac{1}{X_i}-\frac{1}{X_{i-1}}) + \frac{\epsilon X_T}{2}  (\frac{1}{X_T}-\frac{1}{X_{T_0}})
  \\
 &= X_T \sum_{i=2}^{T_0} |R_i|(\frac{1}{X_i}-\frac{1}{X_{i-1}}) + \frac{\epsilon}{2} (1-\frac{X_T}{X_{T_0}})
 \\&\le X_T \sum_{i=2}^{T_0} |R_i|(\frac{1}{X_i}-\frac{1}{X_{i-1}}) + \frac{\epsilon}{2}.
 \end{align*}
As $\lim_{T \to +\infty} X_T=0$, there exists $T_1 \ge T_0$ such that for all $T \ge T_1$,
 \begin{align*}
 \left| X_T \sum_{i=2}^T R_i (\frac{1}{X_i}-\frac{1}{X_{i-1}})    \right| 
 &\le X_T \sum_{i=2}^{T_0} |R_i|(\frac{1}{X_i}-\frac{1}{X_{i-1}}) + \frac{\epsilon }{2}\le \epsilon.
 \end{align*}
 Therefore $\lim_{T \to +\infty} X_T \sum_{i=2}^T R_i (\frac{1}{X_i}-\frac{1}{X_{i-1}}) = 0,$ and we deduce that: \[
X_T = o\left( \frac{1}{\sum_{t=1}^T \alpha_t} \right) \text{ as } T \to +\infty.
\]
\end{proof}

The following lemma, which is a classical result about Riemann series, will be needed in the proof of~\Cref{Thm1}.
\begin{lemma}\label{Lemma8}
    For all $\alpha \in (0,1)$, we have: $\sum_{t=1}^T \frac{1}{t^\alpha}\sim \frac{T^{1-\alpha}}{1-\alpha}$. 
\end{lemma}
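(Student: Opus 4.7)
The plan is to use a standard integral comparison, which is the classical route for establishing the asymptotic equivalence of Riemann sums of this form. Since $\alpha \in (0,1)$, the function $g(t) = 1/t^\alpha$ is positive, continuous, and strictly decreasing on $[1,\infty)$, which is exactly the setting where the integral test yields tight two-sided bounds on the partial sums.

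First, I would observe that for any such decreasing positive $g$, the standard comparison between a Riemann sum and the integral of a monotone function yields
\[
\int_1^{T+1} \frac{dt}{t^\alpha} \;\le\; \sum_{t=1}^T \frac{1}{t^\alpha} \;\le\; 1 + \int_1^T \frac{dt}{t^\alpha},
\]
where the lower bound comes from $g(t) \le g(k)$ for $t \in [k,k+1]$ and the upper bound from $g(t) \ge g(k)$ for $t \in [k-1,k]$ together with the isolated first term $g(1)=1$. Next, I would compute the elementary antiderivative, which for $\alpha \in (0,1)$ gives
\[
\int_1^T \frac{dt}{t^\alpha} \;=\; \frac{T^{1-\alpha} - 1}{1-\alpha}.
\]

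Finally, I would divide both inequalities by $\frac{T^{1-\alpha}}{1-\alpha}$ and pass to the limit. On the upper side, the ratio becomes $\frac{1-\alpha}{T^{1-\alpha}} + 1 - \frac{1}{T^{1-\alpha}}$, which tends to $1$ since $1-\alpha > 0$. On the lower side, the ratio becomes $\left(\frac{T+1}{T}\right)^{1-\alpha} - \frac{1}{T^{1-\alpha}}$, which also tends to $1$ as $T \to \infty$. A squeeze argument then yields $\sum_{t=1}^T t^{-\alpha} \big/ \bigl(\tfrac{T^{1-\alpha}}{1-\alpha}\bigr) \to 1$, which is the claimed equivalence.

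There is essentially no substantive obstacle here; the only point requiring minor care is ensuring the monotonicity direction in the integral comparison is applied correctly so that the lower bound integrates over $[1,T+1]$ (not $[1,T]$), since otherwise one would only conclude an $O$-bound rather than the precise asymptotic constant $1/(1-\alpha)$. Everything else is a direct computation.
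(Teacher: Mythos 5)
Your proof is correct. The paper itself offers no proof of this lemma---it is stated as ``a classical result about Riemann series'' and used without further justification---so there is no argument in the paper to compare against. Your integral-comparison argument, with the lower bound taken over $[1,T+1]$ and the upper bound over $[1,T]$ plus the isolated first term, is the standard way to establish the precise asymptotic constant $\frac{1}{1-\alpha}$, and the squeeze computation checks out.
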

\begin{proof}[Proof of \Cref{Thm1}]
Let us define \( X_T = \min_{t \le T} \|\nabla f(\theta_t) \|_{\mathcal{D}} \) for all \( T \geq 1 \). Since $\sum_{t= 1}^{\infty} \alpha_t^2<\infty $, according to ~\Cref{Lemma2}, we deduce that \( \sum_{t=1}^{\infty} \alpha_t X_t < \infty \) a.s. It is clear that $\{X_T\}_{T \ge 1}$ is a sequence of nonnegative real numbers that is non increasing, then, by proving $\lim_{T \to +\infty} X_T =0 \text{ a.s.}$, using \cref{Lem_Reb} , we can deduce that:

\[
X_T = o\left(\frac{1}{\sum_{t=1}^{T} \alpha_t}\right) \text{a.s.} 
\]
Now, we prove that \( \lim\limits_{T \to \infty} X_T = 0 \) a.s. According to ~\Cref{Lemma2}, we have \( \sum_{t=1}^{\infty} \alpha_t ||\nabla f(\theta^t)||_{\mathcal{D}} < \infty \) a.s. Thus, it follows that:
\[
\{ (\min_{t \leq T} \|\nabla f(\theta^t)\|) \sum_{t=1}^T \alpha_t\}  \quad \text{is bounded almost surely}.
\]
Since $\lim\limits_{T \to +\infty} \sum_{t=1}^T \alpha_t = +\infty$, we can conclude that:
\[
\lim_{T \to +\infty} X_T = \lim_{T \to +\infty} \min_{t \leq T} \|\nabla f(\theta^t)\|_{\mathcal{D}} = 0 \text{ 
 a.s.} 
\]

Therefore, we establish the first result of the theorem. The second result is obtained by choosing \( \{\alpha_t\}_{t \ge 1} \) defined by \( \alpha_t = \frac{1}{t^{\frac{1}{2} + \epsilon}} \). In this case, we have \( \sum_{t=1}^{\infty} \alpha_t = \infty \), while \( \sum_{t=1}^{\infty} \alpha_t^2 <\infty\).

Using \Cref{Lemma8}, we have \[
\sum_{t=1}^{T} \frac{1}{t^{\frac{1}{2}+\epsilon}} \sim  \frac{T^{\frac{1}{2}-\epsilon}}{\frac{1}{2}-\epsilon} .
\]

Therefore, \[
\min_{1 \leq t \leq T} \lVert \nabla f(\theta^t) \rVert_{\mathcal{D}} = o\left(\frac{1}{T^{\frac{1}{2} - \epsilon}}\right) \quad \text{a.s.} 
\]
\end{proof}

\Cref{Lemma3} is first presented in \cite[Proposition 2]{alber1998projected} and again in \cite[Lemma A.5]{mairal2013stochastic}, along with a new proof. We provide a new, simpler proof of this lemma that is more straightforward than those presented in these references.
\begin{lemma}\label{Lemma3}
 \textbf{ \cite[Proposition 2]{alber1998projected} , \cite[Lemma A.5]{mairal2013stochastic} } Let $\{a_t\}_{t \geq 1},\{b_t\}_{t \geq 1}$ be two nonnegative real sequences. We have:
$$
\begin{cases}
     \sum_{t=1}^{\infty} a_t b_t<\infty,\\
     \sum_{t=1}^{\infty} a_t=\infty,\\
     \text{There exists } K \geq 0 \text{ such that } \left|b_{t+1}-b_t\right| \leq K a_t. 
 \end{cases} \Longrightarrow
\lim\limits_{t \rightarrow +\infty} b_t = 0.
$$
\end{lemma}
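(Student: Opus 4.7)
The plan is to argue by contradiction in two stages: first I show that $\liminf_{t\to\infty} b_t = 0$, then I upgrade this to $\lim_{t\to\infty} b_t = 0$ by exploiting the Lipschitz-like bound $|b_{t+1}-b_t| \le K a_t$.

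The first stage is immediate. If $\liminf_{t\to\infty} b_t = L > 0$, then there exists $T_0$ with $b_t \ge L/2$ for all $t \ge T_0$, which forces $\sum_{t \ge T_0} a_t b_t \ge (L/2) \sum_{t\ge T_0} a_t = +\infty$, contradicting $\sum_t a_t b_t < \infty$. Hence $\liminf_{t\to\infty} b_t = 0$.

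For the second stage, I assume toward a contradiction that $\limsup_{t\to\infty} b_t > 0$ and fix $\epsilon > 0$ with $\limsup b_t > 2\epsilon$. Combining this with $\liminf b_t = 0 < \epsilon$, I interleave the two subsequences into a single sequence of indices $q_1 < p_1 < q_2 < p_2 < \cdots$ with $b_{q_k} > 2\epsilon$ and $b_{p_k} < \epsilon$. For each $k$, I define $s_k := \min\{t : q_k \le t \le p_k,\ b_t \le \tfrac{3\epsilon}{2}\}$; this is well defined (since $b_{p_k} < \tfrac{3\epsilon}{2}$) and satisfies $s_k > q_k$ (since $b_{q_k} > \tfrac{3\epsilon}{2}$). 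By the minimality of $s_k$, one has $b_t > \tfrac{3\epsilon}{2}$ for every $t \in [q_k, s_k - 1]$; and since $b_{q_k} - b_{s_k} > \tfrac{\epsilon}{2}$, the telescoping inequality combined with $|b_{t+1}-b_t| \le K a_t$ yields $\sum_{t=q_k}^{s_k-1} a_t \ge \tfrac{\epsilon}{2K}$. Multiplying the two bounds produces $\sum_{t=q_k}^{s_k-1} a_t b_t > \tfrac{3\epsilon^2}{4K}$. Since the intervals $[q_k, s_k-1]$ are pairwise disjoint (they are nested in the disjoint intervals $[q_k, p_k]$), summing over $k$ gives $\sum_{t=1}^{\infty} a_t b_t \ge \sum_k \tfrac{3\epsilon^2}{4K} = +\infty$, contradicting the first hypothesis. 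Therefore $\limsup b_t = 0$, and combined with $b_t \ge 0$ this forces $b_t \to 0$.

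The hard part will be choosing the three thresholds $\epsilon,\ \tfrac{3\epsilon}{2},\ 2\epsilon$ so that the sub-interval $[q_k, s_k-1]$ simultaneously (i) stays strictly above the level $\tfrac{3\epsilon}{2}$, giving a uniform positive lower bound on the $b_t$-factor, and (ii) realizes a descent of at least $\tfrac{\epsilon}{2}$, which through the Lipschitz-like bound produces a uniform positive lower bound on $\sum a_t$. A two-threshold version would leave a residual boundary term of the form $a_{q_k}$ or $a_{p_k}$ uncontrolled; inserting the middle level $\tfrac{3\epsilon}{2}$ bypasses this issue cleanly and makes the per-interval contribution to $\sum a_t b_t$ a universal positive constant, yielding the divergence needed for the contradiction.
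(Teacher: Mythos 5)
Your proof is correct. Both you and the paper begin by deducing $\liminf_{t\to\infty} b_t = 0$ from $\sum_t a_t = \infty$ and $\sum_t a_t b_t < \infty$, and both then work on an interval running from a ``high'' index to the first subsequent index where $b$ falls below a threshold, combining the same two ingredients: the inequality $a_t b_t \ge (\text{threshold})\, a_t$ on the stretch where $b_t$ stays above the threshold, and the telescoped bound $|b_m - b_n| \le K \sum_{i=n}^{m-1} a_i$. The logical packaging differs, however. The paper argues directly: it fixes $n_0$ so that the tail satisfies $\sum_{k \ge n} a_k b_k \le \epsilon^2/(4K)$ and shows that for any $n \ge n_0$ with $b_n > \epsilon/2$, the drop to the first $m>n$ with $b_m \le \epsilon/2$ is at most $K\sum_{i=n}^{m-1} a_i \le \frac{2K}{\epsilon} \sum_{i \ge n} a_i b_i \le \epsilon/2$, hence $b_n \le \epsilon$. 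You instead argue by contradiction with the classical three-threshold crossing construction: assuming $\limsup_t b_t > 2\epsilon$, you produce infinitely many pairwise disjoint intervals each forced to contribute more than $3\epsilon^2/(4K)$ to $\sum_t a_t b_t$, contradicting its convergence. Your route is the standard one from the stochastic-approximation literature and is slightly longer; the paper's direct version has the mild advantage of yielding an explicit rank $n_0(\epsilon)$, determined by the tail of $\sum_t a_t b_t$, beyond which $b_n \le \epsilon$. One cosmetic point, shared by both arguments: the hypothesis allows $K = 0$, in which case one should not divide by $K$; this is harmless, since the condition with $K=0$ implies it with $K=1$ (or directly: $b$ is then constant and $\sum_t a_t = \infty$ forces $b \equiv 0$).
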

\noindent

\begin{proof}[Proof of \Cref{Lemma3}]
First, we note that for all \( n_0 \geq 1 \), we have \( \inf_{n \geq n_0} b_n = 0 \). Indeed, suppose for contradiction that \( \inf_{n \geq n_0} b_n > 0 \). In this case, we would have for all $n \ge n_0$, \( a_n b_n \geq a_n \inf_{m \geq n_0} b_m \), which implies that the series \( \sum a_n b_n \) cannot converge, since \( \sum_{n=1}^{\infty} a_n=\infty \)  and $inf_{m \ge n_0} b_m>0$. This contradiction implies that for all \( n_0 \geq 1 \), we have \( \inf_{n \geq n_0} b_n = 0 \).

Let $\epsilon>0$. Let $n_0 \ge 1$  such that for all $n \ge n_0$ we have $\sum_{k=n}^{\infty} a_k b_k \le \frac{\epsilon^2}{4K}.$ 

The goal is to prove that for all \( n \geq n_0 \), \( b_n \leq \epsilon \). Let \( n \geq n_0 \). If \( b_n \leq \frac{\epsilon}{2} \), then trivially \( b_n \leq \epsilon \). Now assume that \( b_n > \frac{\epsilon}{2} \).

We have \( \inf_{t \geq n} b_t = 0 \), then we can take the smallest index \( m > n \) such that \( b_m \leq \frac{\epsilon}{2} \). We have: 
\begin{align*}
    |b_m-b_n| &\le \sum_{i=n}^{m-1} |b_{i+1}-b_i| \\
    &\le K \sum_{i=n}^{m-1} a_i \\
    &= K \sum_{i \in \{n,...,m-1\},b_i > \frac{\epsilon}{2}} a_i \\
    &\le \frac{2K}{\epsilon}\sum_{i \in \{n,...,m-1\},b_i > \frac{\epsilon}{2}} a_i b_i\\
    &\le \frac{2K}{\epsilon} \sum_{i=n}^{\infty} a_i b_i
    \\
    &\le \frac{\epsilon}{2}.
\end{align*}
Therefore, by the triangle inequality, we have:
\begin{align*}
b_n &\le b_m+\frac{\epsilon}{2} \le \epsilon.
\end{align*}
Thus, for all \( n \geq n_0 \), we have \( b_n \leq \epsilon \), and consequently, we deduce that \( \lim_{n \to +\infty} b_n = 0 \).

\end{proof}

\begin{proof}[Proof of \Cref{Thm2}]

Consider $C > 0$ satisfying: $
||.||_{\mathcal{D} } \le C ||.||_2$.\\\\
Let $t\ge 1.$ We have that:
\begin{align*}
    \left|\left\|\nabla f\left(\theta^{t+1}\right)\right\|_{\mathcal{D}}-\left\|\nabla f\left(\theta^t\right)\right\|_{\mathcal{D}}\right| &\le \lVert \nabla f(\theta^{t+1})-\nabla f(\theta^t) \rVert_{\mathcal{D}} \\
    &\le C L \lVert \theta^{t+1}-\theta^t \rVert_{2} \text{ (because } f \text{ is } L\text{-smooth)}\\
    &= CL\alpha_t \lVert s_t \rVert_{2}\\
    &\le CL\alpha_t \;\; \text{ a.s.}  \text{  (because we assume~\Cref{Ass6} holds true)}
\end{align*}

Therefore, we have that for all $t \ge 1$: $\mathbb{P}\left( 
 \left|\left\|\nabla f\left(\theta^{t+1}\right)\right\|_{\mathcal{D}}-\left\|\nabla f\left(\theta^t\right)\right\|_{\mathcal{D}}\right|  \le CL \alpha_t \right)=1.$ 
 
 Thus: $\mathbb{P}\left( \forall t \ge 1,\;  
 \left|\left\|\nabla f\left(\theta^{t+1}\right)\right\|_{\mathcal{D}}-\left\|\nabla f\left(\theta^t\right)\right\|_{\mathcal{D}}\right|  \le CL \alpha_t \right)=1.$

Given that
$\begin{cases}
\sum_{t=1}^{\infty} \alpha_t   \lVert \nabla f(\theta^t) \rVert_{\mathcal{D}}< \infty \text { a.s, \; (by~\Cref{Lemma2} because $\sum_{t=1}^{ \infty} \alpha_t^2 <\infty$)}\\
    \sum_{t=1}^{\infty} \alpha_t=\infty .
\end{cases}$

Using~\Cref{Lemma3}, with $\{\alpha_t\}_{t\ge 1}$ playing the role of $\{a_t\}_{t\ge 1}$ and $\{\lVert \nabla f(\theta^t) \rVert\}_{t\ge 1}$ playing the role of $\{b_t\}_{t\ge 1}$, we conclude that: $$\lim\limits_{T \rightarrow +\infty} \lVert \nabla f(\theta^T) \rVert_{\mathcal{D}}=0\;\; \text{ a.s.}$$ 
\end{proof}

\begin{proof}[Proof of \Cref{Thm3}]
Consider $C > 0$ satisfying: $
||.||_{\mathcal{D} } \le C ||.||_2$.\\\\
Let $t\ge 1$. We have that:
\begin{align*}
    \left| \mathbb{E}\left[ \left\|\nabla f\left(\theta^{t+1}\right)\right\|_{\mathcal{D}}\right]-\mathbb{E}\left[\left\|\nabla f\left(\theta^t\right)\right\|_{\mathcal{D}}\right] \;\right|  
    &\le
   \mathbb{E}\left[ \; \Big|  \left\|\nabla f\left(\theta^t\right)\right\|_{\mathcal{D}}-  \left\|\nabla f\left(\theta^{t+1}\right)\right\|_{\mathcal{D}}  \Big| \;\right]  
    \\
     &\le
   \mathbb{E}\left[ \lVert \nabla f\left(\theta^{t}\right)-\nabla f\left(\theta^{t+1}\right)  \rVert_{\mathcal{D}} \right]\\
    &\le CL \mathbb{E}\left[ 
  \lVert\theta^{t+1}-\theta^t\rVert_2 \right] \text{ (because } f \text{ is } L\text{-smooth)}
  \\ &= CL \alpha_t \mathbb{E}\left[ \lVert s_t \rVert_2 \right]
  \\ &\le CL \alpha_t  \text{  (because we assume~\Cref{Ass6} holds true)},
\end{align*}
where in the first inequality, we used Jensen's inequality. So, we proved that:
$$\forall t \ge 1,\; 
 \left| \mathbb{E}[\left\|\nabla f\left(\theta^{t+1}\right)\right\|_{\mathcal{D}}]-\mathbb{E}[\left\|\nabla f\left(\theta^t\right)\right\|_{\mathcal{D}}]\right|  \le CL \alpha_t .$$

Now, given that
$\sum_{t=1}^{\infty} \alpha_t   \mathbb{E}[ \lVert \nabla f(\theta^t) \rVert_{\mathcal{D}}] <\infty$  (by~\Cref{Lemma2} because $\sum_{t=1}^{ \infty} \alpha_t^2 <\infty$), and that
$\sum_{t=1}^{\infty} \alpha_t=\infty$ , we use~\Cref{Lemma3}, with $\{\alpha_t\}_{t\ge 1}$ playing the role of $\{a_t\}_{t\ge 1}$ and $\{\mathbb{E}[\lVert \nabla f(\theta^t) \rVert]\}_{t\ge 1}$ playing the role of $\{b_t\}_{t\ge 1}$, to conclude that
$$\lim\limits_{T \rightarrow +\infty} \mathbb{E}[ \lVert \nabla f(\theta^T) \rVert_{\mathcal{D}}]=0.$$
\end{proof}

\begin{lemma}\label{Lemma4}\textbf{\cite[Lemma 1]{liu2022almost} }
 If $\left\{Y_t\right\}_{t \ge 1}$ is a sequence of nonnegative random variables adapted to a filtration $\left\{\mathcal{F}_t\right\}_{t \ge 1}$, and satisfying:
$$
\mathbb{E}\left[Y_{t+1} \mid \mathcal{F}_t\right] \leq\left(1-c_1 \alpha_t\right) Y_t+c_2 \alpha_t^2\;\; \text{ for all } \;\;t \geq 1 ,
$$
where $\alpha_t=O\left(\frac{1}{t^{1-\beta}}\right)$ for some $\beta \in\left(0, \frac{1}{2}\right)$, and $c_1$ and $c_2$ are positive constants. Then, for any $\epsilon \in(2 \beta, 1):$
$$
Y_t=o\left(\frac{1}{t^{1-\epsilon}}\right) \quad \text { a.s.} 
$$
\end{lemma}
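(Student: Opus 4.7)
The plan is to rescale $Y_t$ by $t^{1-\epsilon}$ and apply the Robbins--Siegmund nonnegative supermartingale convergence theorem. Set $Z_t := t^{1-\epsilon} Y_t$; the goal is to prove $Z_t \to 0$ almost surely. Using the elementary inequality $(1+x)^a \le 1+ax$ valid for $a\in[0,1]$ and $x\ge 0$, I would bound $(t+1)^{1-\epsilon} \le t^{1-\epsilon}(1+(1-\epsilon)/t)$. Multiplying the hypothesis $\mathbb{E}[Y_{t+1}\mid\mathcal{F}_t]\le (1-c_1\alpha_t)Y_t+c_2\alpha_t^2$ by this factor and discarding the negative cross term $-c_1\alpha_t(1-\epsilon)/t$ yields
\[
\mathbb{E}[Z_{t+1}\mid\mathcal{F}_t] \;\le\; \left(1 + \tfrac{1-\epsilon}{t} - c_1\alpha_t\right) Z_t + \eta_t, \qquad \eta_t := 2c_2\,\alpha_t^2\, t^{1-\epsilon}.
\]

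The next step is to absorb the positive drift into the contraction. Since $\alpha_t$ is of order $1/t^{1-\beta}$ with $\beta>0$, it decays strictly slower than $1/t$; hence for $t$ large enough, $c_1\alpha_t \ge 2(1-\epsilon)/t$, so that
\[
\mathbb{E}[Z_{t+1}\mid\mathcal{F}_t] \;\le\; \left(1 - \tfrac{c_1\alpha_t}{2}\right) Z_t + \eta_t.
\]
Because $\alpha_t^2 \sim t^{-2(1-\beta)}$, the noise satisfies $\eta_t = O(t^{-(1+\epsilon-2\beta)})$, which is summable precisely because $\epsilon>2\beta$. This is the decisive use of the hypothesis on $\epsilon$.

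I would then invoke Robbins--Siegmund on the nonnegative adapted process $\{Z_t\}_{t\ge t_0}$ (with $\beta_t\equiv 0$, $\zeta_t=(c_1\alpha_t/2)Z_t$, noise $\eta_t$): the summability of $\eta_t$ gives that $Z_t$ converges almost surely to a finite nonnegative random variable $Z_\infty$ and that $\sum_t \alpha_t Z_t < \infty$ almost surely. Since $\sum_t \alpha_t=+\infty$ (as $1-\beta<1$) and $Z_t$ has an a.s.\ limit, the series $\sum_t \alpha_t Z_t$ can only converge if $Z_\infty=0$ almost surely. Thus $t^{1-\epsilon}Y_t \to 0$ a.s., i.e.\ $Y_t = o(1/t^{1-\epsilon})$ a.s.

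The delicate part is the two-sided balance inherent in the choice of the exponent $1-\epsilon$ in the rescaling: the drift $(1-\epsilon)/t$ created by the rescaling must be dominated by $c_1\alpha_t$ (harmless as long as $\beta>0$), while the rescaled noise $\alpha_t^2 t^{1-\epsilon}$ must remain summable (forcing $\epsilon>2\beta$). These two competing requirements are what determine the admissible range of $\epsilon$ stated in the lemma, and getting them to hold simultaneously is where all of the content of the argument lies; once the recursion for $Z_t$ is in the Robbins--Siegmund form, the remainder is routine.
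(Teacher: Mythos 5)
Your proof is correct. Note, however, that the paper does not prove this lemma at all: it is imported verbatim from \cite[Lemma 1]{liu2022almost} and used as a black box in the proof of Theorem 5, so there is no in-paper argument to compare against. Your route --- rescale to $Z_t = t^{1-\epsilon}Y_t$, absorb the drift $(1-\epsilon)/t$ into the contraction $c_1\alpha_t$, check summability of the rescaled noise $\alpha_t^2 t^{1-\epsilon}$ under $\epsilon>2\beta$, and finish with Robbins--Siegmund plus $\sum_t \alpha_t = \infty$ --- is the standard proof of this result and is essentially the argument in the cited reference. The only point worth flagging is that two of your steps (the bound $c_1\alpha_t \ge 2(1-\epsilon)/t$ for large $t$, and the divergence of $\sum_t\alpha_t$) require a \emph{lower} bound $\alpha_t \gtrsim t^{-(1-\beta)}$, not merely the upper bound that the notation $\alpha_t = O(1/t^{1-\beta})$ literally asserts; the lemma is false with only an upper bound (take $\alpha_t\equiv 0$), so the intended reading is $\alpha_t = \Theta(1/t^{1-\beta})$, and your use of it is consistent with how the lemma is applied in the paper. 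With that reading, every step checks out.
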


\begin{proof}[Proof of \Cref{Thm4}]
By~\Cref{Lemma1}, we have that:
$$
    \forall t \ge 1,\; \mathbb{E}[f(\theta^{t+1})  \: \mid \; \theta^t  ]
   \le f(\theta^t) - \mu_{\mathcal{D}} \alpha_t \lVert \nabla f(\theta^t) \rVert_{\mathcal{D}} + \frac{L \alpha_t^2 }{2}.
$$
Knowing from~\eqref{eq-cvx} that: $\forall t \ge 1,\;  f(\theta^t)-f(\theta^{*}) \le R \| \nabla f(\theta^t)\|_{\mathcal{D}}$, we have:
$$\forall t \ge 1,\;  \mathbb{E}[f(\theta^{t+1})-f(\theta^{*}) \: \mid \; \theta^t  ] \le \left( 1- \alpha_t \frac{\mu_{\mathcal{D}}}{ R}  \right) \left(f(\theta^{t})-f(\theta^{*})\right)+ \frac{L  \alpha_t^2  }{2}.$$
Taking the expectation, and knowing that $\alpha_t=\frac{\alpha}{t}$, we get: 
\begin{equation}\label{eq-delta-cvx}
  \forall t \ge 1,\;   \underbrace{\mathbb{E}\left[f(\theta^{t+1})-f(\theta^{*})  \right]}_{:=\delta_{t+1}} \le \left( 1-  \frac{\alpha \mu_{\mathcal{D}}}{ R t}  \right) \underbrace{\mathbb{E}\left[f(\theta^{t})-f(\theta^{*})\right]}_{:=\delta_t}+ \frac{L  \alpha^2  }{2 t^2}.
\end{equation}
If $t \in \{1,...,[\frac{\alpha \mu_{\mathcal{D}}}{R} ]+1  \}$, $\delta_t=\mathbb{E}\left[f(\theta^{t})-f(\theta^{*})\right] \le f(\theta^1)-f(\theta^{*}) \le ([\frac{\alpha \mu_{\mathcal{D}}}{R} ]+1) \frac{ f(\theta^1)-f(\theta^{*})}{t}$. Then $$\forall t \in \{1,...,[\frac{\alpha \mu_{\mathcal{D}}}{R} ]+1  \},\; \delta_t \le \frac{3\alpha \mu_{\mathcal{D}} }{R}\frac{f(\theta^1)-f(\theta^{*})}{t}\;\; \text{, because } [\frac{\alpha \mu_{\mathcal{D}} }{R}] +1 \le  \frac{\alpha \mu_{\mathcal{D}} }{R}+2 \le \frac{3\alpha \mu_{\mathcal{D}} }{R}.$$

Let's denote $a$ and $b$ as follows:  $
a= \max \left(  \frac{3\alpha \mu_{\mathcal{D}} }{R}(f(\theta^1)-f(\theta^{*})), \frac{ L  \alpha^2  }{ 2(\frac{\alpha \mu_{\mathcal{D}}  }{R} -1)} \right)
$ and $
b=\frac{\mu_{\mathcal{D}}}{R}$. 
We have:
\begin{equation}\label{Eqrec}
    \forall  t  \in \{1,...,[\frac{\alpha \mu_{\mathcal{D}}}{R} ]+1  \}, \;  \delta_t \le \frac{ a}{t }.
\end{equation}

We will prove by induction that: 
\begin{equation*}
    \forall t \ge [\frac{\alpha \mu_{\mathcal{D}} }{R}]+1,\;   \delta_t \le \frac{ a}{t }.
\end{equation*}
For $t=[\frac{\alpha \mu_{\mathcal{D}} }{R}]+1$, we have that: $$\delta_{[\frac{\alpha \mu_{\mathcal{D}} }{R}]+1} \le  \frac{a}{t}.$$

Let $t \ge [\frac{\alpha \mu_{\mathcal{D}} }{R}]+1$.  Assume that $\delta_t \le \frac{ a}{t }$ and let's prove that  $\delta_{t+1}\le \frac{a}{t+1}$. We note that $1-  \frac{\alpha \mu_{\mathcal{D}}}{ R t}>0$. 

From~\eqref{eq-delta-cvx}, we get 
$
 \delta_{t} \le \frac{a}{t} \Longrightarrow   \delta_{t+1} \le \frac{a}{t}-\frac{ab\alpha}{t^2}+\frac{L \alpha^2 }{2 t^2}$. 
We have also the following equivalence:
\begin{align*}
\frac{a}{t}-\frac{ab\alpha}{t^2}+\frac{L  \alpha^2}{2  t^2}   \le \frac{a}{t+1} &\Longleftrightarrow -\frac{a b\alpha}{t^2}+\frac{L \alpha^2 }{2  t^2}  \le \frac{-a}{t(t+1)}
\\
&\Longleftrightarrow -ab\alpha +\frac{L \alpha^2}{2 } \le \frac{-at}{t+1}\enspace.
\end{align*}

Let's prove that the last assertion is true. We have: 
\begin{align*}
    -a \le \frac{-at}{t+1} &\Longrightarrow -ab\alpha+a(b \alpha-1)\le  \frac{-at}{t+1}\\
    &\Longrightarrow -ab\alpha+\frac{L \alpha^2}{2 }\le  \frac{-at}{t+1}\;.
\end{align*}
The last implication comes from $a(b \alpha-1)=\max\left((\underbrace{b\alpha-1}_{>0})  \frac{3\alpha \mu_{\mathcal{D}} }{R}(f(\theta^1)-f(\theta^{*})), \frac{L \alpha^2}{2 }   \right)$.

We deduce finally that $\delta_{t+1} \le \frac{a}{t+1}$. Therefore, we get: $\forall T \ge [\frac{\alpha \mu_{\mathcal{D}} }{R}]+1,\;  \mathbb{E}[f(\theta^T)]-f(\theta^{*}) \le \frac{a}{T}$, 
and using \eqref{Eqrec}, we deduce that: $$\forall T \ge 1,\;  \mathbb{E}[f(\theta^T)]-f(\theta^{*}) \le \frac{a}{T}.$$

In particular, if $\mu_{\mathcal{D}}$ is proportional to $\frac{1}{\sqrt{d}}$, then by taking  $\alpha=\frac{2R}{\mu_{\mathcal{D}}}$, we have: $$a=\max( \frac{3\alpha \mu_{\mathcal{D}} }{R}(f(\theta^1)-f(\theta^{*})), \frac{L \alpha^2 }{2( \frac{\alpha \mu_{ \mathcal{D}} }{R}-1 ) })=\max ( 6(f(\theta^1)-f(\theta^{*})),   \frac{L\frac{4R^2}{\mu_{\mathcal{D}}^2} }{2})=O(d),$$
therefore $\mathbb{E}[f(\theta^T)]-f(\theta^{*})=O(\frac{d}{T})$.
\end{proof}

\begin{proof}[Proof of \Cref{Thm5}]

By employing the first part of the proof of~\Cref{Thm4}, we have that:
$$\forall t \ge 1,\;  \mathbb{E}[f(\theta^{t+1})-f(\theta^{*}) \: \mid \; \theta^t  ] \le \left( 1- \alpha_t \frac{\mu_{\mathcal{D}}}{ R}  \right) \left(f(\theta^{t})-f(\theta^{*})\right)+ \frac{L \alpha_t^2  }{2}$$
Using~\Cref{Lemma4}, we deduce that when $\alpha_t=O(\frac{1}{t^{1-\theta}})$ with $\theta \in (0,\frac{1}{2})$, we get: 
$$\forall \epsilon \in (2\theta,1),\; f(\theta^T)-f(\theta^{*})=o(\frac{1}{T^{1-\epsilon}}) \;\;\text{ a.s.}$$
\end{proof}

\begin{proof}[Proof of \Cref{Lemma5}]

Let $t \ge 1$.
Using the smoothness property in~\eqref{ineq-smooth}, we have: 
$$
\begin{cases}
f(\theta^{t}+ \alpha_t s_t ) \le f(\theta^t)+ \alpha_t \langle \nabla f(\theta^t),s_t\rangle+\frac{L}{2} \alpha_t^2 ||s_t||^2 \\
f(\theta^{t}- \alpha_t s_t ) \le f(\theta^t)- \alpha_t \langle \nabla f(\theta^t),s_t\rangle+\frac{L}{2} \alpha_t^2 ||s_t||^2
\end{cases} .
$$

Then $f(\theta^{t+1}) \le f(\theta^t)- \alpha_t | \langle \nabla f(\theta^t),s_t \rangle| +\frac{L}{2} \alpha_t^2 ||s_t||^2$. By replacing $\alpha_t$ by its expression, and using~\Cref{Ass6}, we have: 
\begin{align*}
      f(\theta^{t+1}) &\le f(\theta^t)- \frac{|f(\theta^t+h^{-t} s_t)-f(\theta^t) |}{L h^{-t}} | \langle \nabla f(\theta^t),s_t \rangle|+\frac{L}{2} \left(\frac{f(\theta^t+h^{-t} s_t)-f(\theta^t) }{L h^{-t}} \right)^2 \text{ a.s.} \\
       &\le f(\theta^t)- \frac{|f(\theta^t+h^{-t} s_t)-f(\theta^t) |}{L h^{-t}} | \langle \nabla f(\theta^t),s_t \rangle|\\&+\frac{L}{2} \left( \frac{\lvert f(\theta^t+h^{-t}s_t )-f(\theta^t)\rvert- \lvert\langle \nabla f(\theta^t),h^{-t}s_t \rangle \rvert }{Lh^{-t}}  \right)^2\\&+\frac{ |f(\theta^t+h^{-t}s_t )-f(\theta^t)|\; | \langle \nabla f(\theta^t),h^{-t}s_t \rangle| }{Lh^{-2t}}-\frac{  |\langle \nabla f(\theta^t),h^{-t}s_t\rangle|^2 }{2L h^{-2t}}\text{ a.s.}
\\
      &\le f(\theta^t)-\frac{  |\langle \nabla f(\theta^t),s_t\rangle|^2 }{2L}+\frac{L}{2} \left( \frac{|f(\theta^t+h^{-t}s_t )-f(\theta^t)|- |\langle \nabla f(\theta^t),h^{-t}s_t \rangle|  }{Lh^{-t}}  \right)^2\text{ a.s.}
      \\
      &\le f(\theta^t)-\frac{  |\langle \nabla f(\theta^t),s_t\rangle|^2 }{2L}+\frac{L}{2} \left( \frac{|f(\theta^t+h^{-t}s_t )-f(\theta^t)- \langle \nabla f(\theta^t),h^{-t}s_t \rangle | }{Lh^{-t}}  \right)^2 \text{ a.s.}
            \\
      &\le f(\theta^t)-\frac{  |\langle \nabla f(\theta^t),s_t\rangle|^2 }{2L}+\frac{L}{2} \left( \frac{ \frac{L h^{-2t} ||s_t||^2 }{2}   }{Lh^{-t}}  \right)^2 \text{ a.s.  }  (\text{using property~\eqref{ineq-smooth}} )
      \\ &\le  f(\theta^t)-\frac{  |\langle \nabla f(\theta^t),s_t\rangle|^2 }{2L}+\frac{L}{8}h^{-2t}\;\; \text{ a.s.  }
    \end{align*}
  We conclude that: $\forall t \ge 1,\;  f(\theta^{t+1}) \le  f(\theta^t)-\frac{  |\langle \nabla f(\theta^t),s_t\rangle|^2 }{2L}+\frac{L}{8}h^{-2t}\; \text{a.s.} $ 
\end{proof}

\begin{proof}[Proof of \Cref{Lemma6}]

Let \( t \ge 1 \). By~\Cref{Lemma5}, we have: 
\[ 
f(\theta^{t+1}) \leq f(\theta^t) - \frac{\left|\left\langle \nabla f(\theta^t), s_t \right\rangle\right|^2}{2L} + \frac{Lh^{-2t}}{8} \quad \text{a.s.} 
\]

Using the tower property: 
\begin{align*}
\mathbb{E}\left[\left|\left\langle\nabla f\left(\theta^t \right), s_t\right\rangle\right|^2\right] 
&= \mathbb{E}\left[\mathbb{E}_{s_t\sim \mathcal{D}}\left[\left|\left\langle\nabla f\left(\theta^t\right), s_t\right\rangle\right|^2 \mid \theta^t\right]\right] \\
&\underbrace{\geq}_{\text{Jensen Inequality}} \mathbb{E}\left[\left(\mathbb{E}_{s_t\sim \mathcal{D}}\left[\left|\left\langle\nabla f\left(\theta^t\right), s_t\right\rangle\right| \mid \theta^t\right]\right)^2\right] \\
&\underbrace{\geq}_{\text{\Cref{Ass5}}} \mu_{\mathcal{D}}^2 \mathbb{E}\left[\left\|\nabla f\left(\theta^t\right)\right\|_{2}^2\right].
\end{align*}

It holds that:
$
\mathbb{E}\left[f\left(\theta^{t+1} \right)\right]- f(\theta^{*}) \leq \mathbb{E}\left[f\left(\theta^{t} \right)- f(\theta^{*})\right]-\frac{\mu_{\mathcal{D}}^2}{2 L} \mathbb{E}\left[\left\|\nabla f\left( \theta^t \right)\right\|_2^2\right]+\frac{L h^{-2t}}{8}.
$

By \Cref{Ass4}, we have $\left\|\nabla f\left(\theta^t\right)\right\|_2^2 \geq 2 \mu\left(f\left(\theta^t\right)-f(\theta^{*}) \right)$, then:

$$
\mathbb{E}\left[f\left(\theta^{t+1} \right)- f(\theta^{*})\right] \leq \left(1-\frac{\mu_{\mathcal{D}}^2 \mu}{L}\right) \mathbb{E}\left[f\left(\theta^{t} \right)- f(\theta^{*})\right]+\frac{L h^{-2t}}{8}.
$$ 
Thus by induction we obtain: $$\forall T \ge 2, \;
\mathbb{E}[f(\theta^{T})-f(\theta^{*})  ]  \leq\left(1-\frac{\mu_{\mathcal{D}}^2 \mu}{L}\right)^{T-1} [f(\theta^{1})-f(\theta^{*})  ]+ \frac{L}{8}\sum_{i=1}^{T-1}\left(1-\frac{\mu_{\mathcal{D}}^2 \mu}{L}\right)^{T-1-i} h^{-2i}.$$
\end{proof}

\begin{proof}[Proof of \Cref{Thm7}]
Since $h \in \left(\frac{1}{\sqrt{1-\frac{\mu_{\mathcal{D}}^2 \mu}{L}}}, \infty \right)$, it holds that: $ \forall T \ge 2, \;  1-\frac{1}{h^2\left(1-\frac{\mu_{\mathcal{D}}^2 \mu}{L}\right)   } >0.$

Then, using \Cref{Lemma6}, we get: 
\begin{align*}
\forall T \ge 2, \; 
\mathbb{E}[f(\theta^{T})-f(\theta^{*})  ]  &\leq\left(1-\frac{\mu_{\mathcal{D}}^2 \mu}{L}\right)^{T-1} [f(\theta^{1})-f(\theta^{*})  ]+\frac{L}{8h^2} \frac{ \left(1-\frac{\mu_{\mathcal{D}}^2 \mu}{L}\right)^{T-2}}{1- \frac{1}{h^2\left(1-\frac{\mu_{\mathcal{D}}^2 \mu}{L}\right)   } }   \\
&\label{eq-strong}\leq \left(1-\frac{\mu_{\mathcal{D}}^2 \mu}{L}\right)^{T-1} \bigg[f(\theta^{1})-f(\theta^{*}) 
+\frac{L}{8} 
\frac{ 1}{h^2 \left(1-\frac{\mu_{\mathcal{D}}^2 \mu}{L} \right)- 1 } \bigg],
\end{align*}
which gives the desired inequality. 

In the particular case where $\mu_{\mathcal{D}} = \frac{K}{\sqrt{d}}$, by replacing $h$ and $\mu_{\mathcal{D}}$ by their formulas, we obtain the desired rate $O\left(\left(1 - \frac{\mu K^2}{dL}\right)^T\right)$.
\end{proof}

\begin{proof}[Proof of \Cref{Thm6}]

Let $s \in (0,1)$, we consider $a=1-s\frac{\mu_{\mathcal{D}}^2 \mu }{L}$. By multiplying the inequality~(\ref{eq-strong-exp}) by $a^{-T}$, we get that for all $T \ge 2$:
$$\mathbb{E}[a^{-T} \left( f(\theta^{T})-f(\theta^{*})  \right)]  
\le  \left(a^{-1}-\frac{a^{-1}\mu_{\mathcal{D}}^2 \mu}{L}\right)^{T-1} \bigg[a^{-1}\left(f(\theta^{1})-f(\theta^{*})\right) +\frac{a^{-1} L}{8} \frac{1}{h^2 \left(1-\frac{\mu_{\mathcal{D}}^2 \mu}{L} \right)- 1 }\bigg].$$

As  $\;a^{-1} - \frac{a^{-1} \mu_{\mathcal{D}}^2 \mu }{L} = \frac{1 - \frac{\mu_{\mathcal{D}}^2 \mu}{L}}{1 - s\frac{\mu_{\mathcal{D}}^2 \mu}{L}}  \in (0,1)$
, it holds that: $$\mathbb{E}[\sum_{T=2}^{\infty} a^{-T}\left(f(\theta^{T})-f(\theta^{*})\right) ] = \sum_{T=2}^{\infty}  \mathbb{E}[a^{-T}\left(f(\theta^{T})-f(\theta^{*})\right)] <\infty.$$ 
Therefore: $\sum_{T=2}^{\infty} a^{-T}\left(f(\theta^{T})-f(\theta^{*})\right) <\infty \;\; \text{ a.s.} $, and we conclude that: $$f(\theta^T)-f(\theta^{*}) =o\left(a^{T}\right)\;\; \text{ a.s.} $$
\end{proof}

\end{document}